\documentclass[12pt,a4paper]{article}

\usepackage{amsmath,amssymb,color} 
\usepackage[T1]{fontenc}
\usepackage{lmodern}
\usepackage{latexsym}
\usepackage{amsfonts}
\usepackage{amssymb}
\usepackage{mathtools}
\usepackage{amsthm}
\usepackage{multirow}
\usepackage{wrapfig}
\usepackage{float}

\newtheorem{theorem}{Theorem} 
\newtheorem{lemma}[theorem]{Lemma}

\newtheorem{claim}{Claim}

\newcommand{\HH}{\mathcal{H}}
\newcommand{\FF}{\mathcal{F}}
\newcommand{\DD}{\mathcal{D}}
\newcommand{\CC}{\mathcal{C}}

\newcommand{\rank}{\operatorname{rank}}

\newcommand{\integerset}{\mathbb{Z}}

\date{\empty}

\begin{document} 

\title{Berge $k$-Factors of Regular Hypergraphs}

\author{Mikio Kano$^1$\thanks{ mikio.kano.math@vc.ibaraki.ac.jp}, 
Shun-ichi Maezawa$^2$\thanks{maezawa.mw@gmail.com}, 
Akira Saito$^2$\thanks{saitou.akira@nihon-u.ac.jp}, \\
Kiyoshi Yoshimoto$^3$\thanks{kiyoshi.yoshimoto@gmail.com} \\
$^1$  Ibaraki University, Hitachi, Ibaraki, Japan. \\
$^2$ Nihon University, Setagaya-ku, Tokyo, Japan. \\
$^3$ Nihon University, Chiyoda-ku, Tokyo, Japan.
}

%%%%%%%%%%%%%%%%%%%%%%%%%%%%%%%%%%%%%%%%%%%%%%%%%%%%%%%%%%%%%%%%%%%%%%%%%%%%%%%%%%%%%%%%%
\maketitle

\begin{abstract}
A Berge $k$-factor in a hypergraph is a generalization
of a $k$-factor in a graph.
In this paper,
we study the problem of determining the values $k$
such that every $\lambda$-edge-connected $r$-regular hypergraph
$\HH$ with $k|V(\HH)|$ even has a Berge $k$-factor.
While this problem is completely solved for ordinary graphs,
we report that there arises a new upper bound to $k$ based on the rank of $\HH$
for hypergraphs and that it is stronger than the classical upper bound
based on the edge-connectivity in most cases. 
\end{abstract}

\section{Introduction}
In this paper,
we extend some results on factors in graphs to those in hypergraphs.
When we consider factors, 
we often deal with graphs which may have multiple edges but no loops.
 Thus, throughout this paper, a graph means a multigraph, whereas a graph with neither loops nor multiple edges is called a {\em simple graph}.
Following this convention,
we allow hypergraphs to have multiple edges as well.
Moreover,  since edges consisting of one vertex play no role in this paper, 
we assume that
every edge consists of at least two vertices.
To make these conventions clear,
we begin with the definition of a hypergraph.

A hypergraph $\HH$ is a pair $(V(\HH), E(\HH))$,
where $V(\HH)$ is a finite set and $E(\HH)$ is a finite multiset.
An element of $V(\HH)$ is called a vertex, and an element of
$E(\HH)$ is called an edge, which consists of at least two vertices of  $V(\HH)$, 
where multiple edges are allowed.
 We adopt set-theoretic terminology for $E(\HH)$, for example, 
 the size of an edge $e$ is denoted by $|e|$.

The maximum size of an edge in a hypergraph $\HH$
is called the {\em rank} of $\HH$ and denoted by $\rank(\HH)\colon$
$\rank(\HH)=\max\{|e|\colon e\in E(\HH)\}$.
If $|e|=t$ for every $e\in E(\HH)$,
we say that $\HH$ is {\em $t$-uniform}.
Thus,
an ordinary graph is a $2$-uniform hypergraph and also a hypergraph with rank 2.
For a vertex $v$ in $\HH$,
the number of edges that contain $v$ is called the degree
of $v$ in $\HH$ and denoted by $\deg_{\HH}(v)$.
If all vertices of $\HH$ have the same degree $r$, 
we say that $\HH$ is {\em $r$-regular}.
A hypergraph $\HH'$ with $V(\HH')\subseteq V(\HH)$
and $E(\HH')\subseteq E(\HH)$ is
called a subhypergraph of $\HH$.
In particular,
if a subhypergraph $\HH'$ of $\HH$ satisfies $V(\HH')=V(\HH)$, then
we say that $\HH'$ is a spanning subhypergraph of $\HH$.
For $X\subset E(\HH)$,
we let $\HH-X$ denote the spanning subhypergraph of $\HH$
with $E(\HH-X)=E(\HH)-X$.

Berge~\cite{Berge1970}
introduced the notion of a Berge-path and a Berge-cycle.
A Berge-path in a hypergraph $\HH$ is an alternate sequence
$P=(v_1, e_1, v_2, e_2, \ldots, v_l, e_l, v_{l+1})$
of distinct vertices $v_1, v_2,\dots, v_{l+1}$ and distinct edges
$e_1, e_2,\dots, e_l$
with $v_i, v_{i+1} \in e_i$
($1\le i \le l$).
We say that $P$ connects $v_1$ and $v_{l+1}$.
Similarly,
a Berge-cycle is
an alternate sequence $C=(v_1, e_1, v_2, e_2,\dots, v_l, e_l, v_{l+1}=v_1)$
of distinct vertices $v_1, v_2,\dots, v_l$ and distinct edges
$e_1, e_2,\dots, e_l$ with $v_i, v_{i+1} \in e_i$
($1\le i\le l$).
In both definitions,
the number $l$ of edges is called the length of $P$ and $C$, respectively.
A hypergraph $\HH$ is connected
if for every pair of vertices $u$ and $v$,
there exists a Berge-path connecting $u$ and $v$.
Moreover, for a positive integer $\lambda$,
we say that $\HH$ is {\em $\lambda$-edge-connected}
if for every set $\{e_1,\dots, e_l\}$ of at most $\lambda -1$ edges of  $\HH$,
$\HH-\{e_1,\dots, e_l\}$ is connected.

A Berge-path and a Berge-cycle are natural extensions of
a path and a cycle in ordinary graphs,
respectively,
and numerous topics on paths and cycles in graphs
have been extended to those for Berge-paths and Berge-cycles~in hypergraphs. 
See \cite{FKL-2020, GL-2012, KLLZ-2022},
for example.

In graph theory,
while a path and a cycle are defined as sequences,
we naturally view them as subgraphs.
In order to introduce the same view to hypergraphs,
Gerbner and Palmer~\cite{GP-2015} proposed 
 the notion of a Berge-$G$ hypergraph.
Let $\HH$ be a hypergraph and let $G$ be a graph
with $V(\HH)=V(G)$.
Then we say that $\HH$ is Berge-$G$
if there exists a bijection $f\colon E(G)\to E(\HH)$
satisfying $e\subseteq f(e)$ for every $e\in E(G)$.
Under these definitions,
a Berge-path (resp. Berge-cycle) in a hypergraph $\HH$ is a subhypergraph
of $\HH$ 
which is Berge-$P$ (resp. Berge-$C$)
for some path $P$ (resp. cycle $C$).

For a positive integer $k$,
a $k$-factor of a graph $G$
is a $k$-regular spanning subgraph of $G$.
As an extension of this definition,
we define a {\em Berge $k$-factor} of a hypergraph $\HH$
as a spanning subhypergraph $\FF$ of $\HH$
which is Berge-$G$ for some $k$-regular graph $G$.
Namely, if $\FF$ is a Berge $k$-factor of $\HH$, then there exists a mapping 
$\rho : E(\FF) \to \binom{V(\HH)}{2}$ such that 
(i) $\rho (e)=v_1v_2$, where $e\in E(\FF)$, $v_1,v_2\in e$ and $v_1v_2$ is an edge joining $v_1$ to $v_2$, and (ii)  a graph $F$ with $V(F)=V(\HH)$ and
$E(F)=\{\rho(e): e\in E(\FF)\}$
is a $k$-regular graph, and vice versa.

Using this definition,
we can extend various topics in factors of graphs
to Berge $k$-factors of hypergraphs.
Actually,
we are not the first to take this approach.
Gao,
Shan and Yu~\cite{GSY-2023}
extended the notion of toughness to hypergraphs
and proved that for every positive integer $k$,
a $k$-tough hypergraph $\HH$ with $k |V(\HH)|$ even and $|V(\HH)|\ge k+1$ has a Berge $k$-factor.

In this paper,
we consider Berge $k$-factors in regular hypergraphs with specified
edge-connectivity.
Let $r$ be a positive integer.
Petersen~\cite{Petersen}
proved that if $r$ is even,
then every $r$-regular graph
has a $k$-factor for every even integer $k$
with $2\le k\le r-2$.
In the case where $k$ is odd,
Gallai~\cite{Gallai}
proved that every $\lambda$-edge-connected
$r$-regular graph $G$ of even order
has a $k$-factor for every odd $k$
with $\frac{1}{\lambda}r\le k\le \left(1-\frac{1}{\lambda}\right)r$.
These results stimulated the research
on the value $k$ such that
for given $r$ and $\lambda$,
every $\lambda$-edge-connected $r$-regular graph $G$
with $k|V(G)|$ even has a $k$-factor.
After a number of studies,
this problem was completely solved.

\begin{theorem}[\cite{BSW-1985} (See also Theorem~3.4 of \cite{AK-2011})]
Let $r$ and $\lambda$ be positive integers
and let $k$ be an integer with $1\le k\le r-1$.
Also let $\lambda^{\ast}$ be the odd integer
with $\lambda^{\ast}\in\{\lambda, \lambda+1\}$.
Then every $\lambda$-edge-connected $r$-regular graph $G$
with $k|V(G)|$ even has a $k$-factor if 
\begin{enumerate}
\item
both $r$ and $k$ are even and $2\le k\le r-2$
\textnormal{(Petersen~\cite{Petersen})},
\item 
$r$ is even,
$k$ is odd and
$\frac{1}{\lambda}r\le k \le \left(1-\frac{1}{\lambda}\right)r$
\textnormal{(Gallai~\cite{Gallai})},
\item
$r$ is odd,
$k$ is even and
$2\le k\le \left(1-\frac{1}{\lambda^{\ast}}\right)r$, \quad or
\item
both $r$ and $k$ are odd and
$\frac{1}{\lambda^{\ast}}r\le k\le r-2$.
\end{enumerate}
Moreover, the above bounds on $k$ are best possible.
\label{regular_factors}
\end{theorem}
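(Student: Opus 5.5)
The plan is to prove sufficiency with Tutte's $f$-factor theorem for multigraphs, specialised to the constant function $f\equiv k$. A graph $G$ has a $k$-factor if and only if, for all disjoint $S,T\subseteq V(G)$,
\[
\delta(S,T)=k|S|-k|T|+\sum_{v\in T}\deg_{G-S}(v)-q(S,T)\ge 0,
\]
where $q(S,T)$ counts the components $C$ of $G-(S\cup T)$ with $k|C|+e(C,T)$ odd. In an $r$-regular graph we have $\sum_{v\in T}\deg_{G-S}(v)=r|T|-e(S,T)$. Counting degrees gives $e(S,T)\le r\min\{|S|,|T|\}$. Each counted component $C$ satisfies $e(C,S\cup T)\ge\lambda$. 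The strategy is to suppose $\delta(S,T)<0$ and obtain a contradiction. By the parity lemma, $\delta(S,T)\equiv k|V(G)|\equiv 0 \pmod 2$, so we may in fact assume $\delta(S,T)\le -2$.

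The first step is to use parity more carefully. For a component $C$ we have $r|C|=2e(C)+e(C,S)+e(C,T)$. Hence $e(C,S)+e(C,T)\equiv r|C|$, so the oddness of $k|C|+e(C,T)$ ties the parity of $e(C,S\cup T)$ to $r$ and $k$. When $r$ is even and $k$ is even, every counted component has $e(C,S\cup T)$ odd and therefore at least $3$. When $r$ is odd, the relevant cut is forced to have a specific parity, and this is exactly why $\lambda$ can be replaced by the odd number $\lambda^\ast$: an odd cut of size at least $\lambda$ has size at least $\lambda^\ast$. Next we double count, writing $e(S\cup T, \overline{S\cup T})\ge \lambda^\ast q(S,T)$ (or $\ge 3q$ in the even-even case), and bound the left side using $r|S|-e(S,T)-2e(S)$ and $r|T|-e(S,T)-2e(T)$. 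Combining this with $\delta<0$ gives an inequality of the form $\lambda^\ast q\le r|S|+r|T|-2e(S,T)$ against $q>k|S|+(r-k)|T|-e(S,T)$. From these we derive $k\le r/\lambda^\ast$ or $k\ge (1-1/\lambda^\ast)r$ in the respective regimes, with boundary cases excluded by parity. For Petersen's case, the alternative is to use $2$-factorization of even-regular graphs directly.

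The main obstacle is the weighting. The two cut contributions from $S$ and from $T$ must be split with weights $k/r$ and $(r-k)/r$, so that both sides scale correctly, and each of the four parity regimes needs a different slack. I would first try the convex combination
\[
\frac{k}{r}e(S,\overline{\cdot})+\frac{r-k}{r}e(T,\overline{\cdot})
\]
and check that it beats $q$ exactly at the stated thresholds.

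For sharpness, I would construct extremal examples. One example takes a central vertex set joined by $\lambda$ (or $\lambda^\ast$) parallel edges to many small odd gadgets. Another uses $K_{r+1}$-like blocks minus a matching, chosen so that $S=\{v\}$ or $T=\{v\}$ violates Tutte's condition just past each bound.
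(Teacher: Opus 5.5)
The reduction is correct: Tutte's criterion with $f\equiv k$ gives $\delta(S,T)=k|S|+(r-k)|T|-e(S,T)-q(S,T)$. The convex combination in your last paragraph is also the right tool. The paper only cites this theorem, and the nearest argument in it is the discharging in the proof of Theorem~\ref{main_theorem}; your weighting is the graph analogue of that discharging. However, the argument you actually write down does not work, and the check you defer is the whole proof.

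\textbf{The parity claims and the unweighted count.} The parity facts are wrong. From $e(C,S)+e(C,T)\equiv r|C|$ and $k|C|+e(C,T)$ odd:
\begin{itemize}
\item When $r$ and $k$ are both even, $e(C,S)$ and $e(C,T)$ are both odd. So $e(C,S\cup T)$ is even, not odd.
\item When $r$ is odd, the parity of $e(C,S\cup T)\equiv |C|$ is not forced at all. What is forced is that $e(C,T)$ is odd (if $k$ is even) or that $e(C,S)$ is odd (if $k$ is odd).
\end{itemize}
Consequently $e(S\cup T,\overline{S\cup T})\ge\lambda^{\ast}q(S,T)$ is false in cases (3) and (4). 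For even $\lambda$, a counted component may have $e(C,S)=\lambda-1$ and $e(C,T)=1$ in case (3), or $e(C,S)=1$ and $e(C,T)=\lambda-1$ in case (4). With only the valid bound $\lambda$, the unweighted inequalities yield no contradiction. For $r=5$, $k=2$, $\lambda=2$ they read $2q\le 5(|S|+|T|)-2e(S,T)$ and $q>2|S|+3|T|-e(S,T)$, which combine only to $|T|<|S|$. The unweighted count does settle Gallai's case (2), using $e(S,T)\le r\min\{|S|,|T|\}$, and your $2$-factorization settles case (1).

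\textbf{The missing step: the weighted count, component by component.} Since $\sum_C e(S,C)\le r|S|-e(S,T)$ and $\sum_C e(T,C)\le r|T|-e(S,T)$,
\[
\sum_{C}\Bigl(\frac{k}{r}e(S,C)+\frac{r-k}{r}e(T,C)\Bigr)\le k|S|+(r-k)|T|-e(S,T).
\]
So it suffices that every counted $C$ receives weight at least $1$. There are three cases.
\begin{itemize}
\item If $C$ has edges to both $S$ and $T$, this is automatic: the weight is at least $\frac{k}{r}+\frac{r-k}{r}=1$, with no use of connectivity. This is exactly what the unweighted count throws away.
\item If $e(T,C)=0$, being counted forces $k$ and $|C|$ odd, so $e(S,C)\equiv r \pmod 2$. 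Hence $e(S,C)\ge\lambda$, and $e(S,C)\ge\lambda^{\ast}$ when $r$ is odd. This yields the lower bounds $r/\lambda$ in (2) and $r/\lambda^{\ast}$ in (4), and such $C$ cannot occur in (1) or (3).
\item If $e(S,C)=0$, being counted forces $r+k$ odd and $|C|$ odd. Then $e(T,C)\equiv r$ is even and at least $\lambda$ in (2), and odd and at least $\lambda^{\ast}$ in (3), yielding the upper bounds. Such $C$ cannot occur in (1) or (4).
\end{itemize}
Together with $S\cup T\neq\emptyset$, which follows from $k|V(G)|$ even, this proves all four sufficiency statements at once.

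\textbf{Sharpness.} Your sharpness paragraph is only a gesture. The natural examples take $S=\{v\}$ (resp.\ $T=\{v\}$), with $v$ joined by $\lambda$ (resp.\ $\lambda^{\ast}$) edges to each of $r/\lambda$ (resp.\ $r/\lambda^{\ast}$) odd-order gadgets. You would still have to handle divisibility and verify $\lambda$-edge-connectivity.
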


The statements of Theorem~\ref{regular_factors}
are divided into four cases depending on
the parities of $r$ and $k$.
But not all of them are independent.
If an $r$-regular graph $G$ has a $k$-factor $F$,
then $G-E(F)$ is an $(r-k)$-factor of $G$.
From this observation,
we see that the statement~(4) follows from the statement~(3).
The statement~(2) gives both nontrivial lower and upper bounds.
But the lower bound is deduced from the upper bound.

The purpose of this paper is to consider the same problem for hypergraphs,
and compare the obtained bounds of $k$ with the traditional bounds
for ordinary graphs.

The following is our result.

\begin{theorem}
Let $r$, $k$ and $\lambda$ be integers with
$2 \le \lambda \le r$ and $1\le k \le r-1$.
Also
let $\lambda^{\ast}$ be the odd integer with $\lambda^{\ast}\in \{\lambda, \lambda+1\}$.
Then every $\lambda$-edge-connected $r$-regular  hypergraph $\HH$
has a Berge $k$-factor if 
\begin{enumerate}
\item
$k$ is even and ~$\displaystyle 2\le k \le \min\left\{1-\frac{1}{\lambda^{\ast}}, \frac{2}{\rank(\HH)}\right\} \cdot r$,
\item
$k$ is odd,
$|V(\HH)|$ is even
and ~
$\displaystyle \frac{1}{\lambda} r\le k\le \min\left\{1-\frac{1}{\lambda}, \frac{2}{\rank(\HH)}\right\} \cdot r$, \quad or 
\item 
both $r$ and $k$ are even,  every edge in $\HH$ consists of
an even number of vertices, and ~$\displaystyle k \le  \frac{2r}{\rank(\HH)}$.
\end{enumerate}
\label{main_theorem}
\end{theorem}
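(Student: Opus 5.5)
The plan is to translate the problem into a degree-constrained factor problem in the bipartite incidence graph, and then verify a general factor theorem using regularity and edge-connectivity. Let $B$ be the incidence graph of $\HH$, with vertex classes $V(\HH)$ and $E(\HH)$, where $v\in V(\HH)$ is adjacent to $e\in E(\HH)$ whenever $v\in e$. Then $\deg_B(v)=r$ and $\deg_B(e)=|e|\le \rank(\HH)$. A Berge $k$-factor of $\HH$ is exactly a spanning subgraph $S$ of $B$ such that:
\begin{itemize}
\item every $v\in V(\HH)$ has $\deg_S(v)=k$;
\item every $e\in E(\HH)$ has $\deg_S(e)\in\{0,2\}$.
\end{itemize}
Indeed, a hyperedge of degree $2$ in $S$ is mapped by $\rho$ to the pair of its two chosen vertices, and the resulting graph $F$ is $k$-regular. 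The allowed degree sets $\{k\}$ and $\{0,2\}$ have no gap of length $\ge 2$. So Cornuéjols' general factor theorem, equivalently Lovász's parity $(g,f)$-factor theorem, gives an exact criterion: for all disjoint $A,C\subseteq V(B)$, a deficiency expression
\[ \delta(A,C)=\sum_{x\in A}f(x)-\sum_{y\in C}g(y)+\sum_{y\in C}\deg_{B-A}(y)-q(A,C)\]
must be $\ge 0$. Here $q(A,C)$ counts the components of $B-(A\cup C)$ that are odd in the appropriate parity sense.

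The first step is the fractional part of the criterion. Assigning weight $k/r$ to every edge of $B$ gives each $v$ weight $k$, and each hyperedge $e$ weight $k|e|/r\le k\,\rank(\HH)/r\le 2$ by the hypothesis $k\le 2r/\rank(\HH)$. So the real point $(k,\dots,k;\,k|e|/r)$ lies in the relevant polytope, where the hyperedge degree may range over $[0,2]$. From this, the term $\sum_A f-\sum_C g+\sum_C\deg_{B-A}$ is bounded below by the number of $B$-edges leaving the components of $B-(A\cup C)$, weighted by $\min\{k/r,\,1-k/r\}$-type factors. This is the same computation as in the graph case (Theorem~\ref{regular_factors}): each component $D$ is joined to $A\cup C$ by edges of $B$ whose images in $\HH$ form an edge cut. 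Since $\HH$ is $\lambda$-edge-connected, each such component contributes at least $\lambda\cdot\min\{k/r,1-k/r\}$. The conditions $\frac1\lambda r\le k\le(1-\frac1\lambda)r$ then force each odd component to contribute at least $1$, which absorbs $q(A,C)$.

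The second step is the parity bookkeeping.
\begin{itemize}
\item For $k$ odd, the hypothesis that $|V(\HH)|$ is even, with the bounds as in Gallai's theorem, handles the case $A\cup C=\emptyset$ and the global parity.
\item For $k$ even, an odd component's cut must have odd total in a suitable sense. This replaces $\lambda$ by the odd $\lambda^\ast$, exactly as in statement~(3) of Theorem~\ref{regular_factors}, and there is no lower bound on $k$ since $k=2$ is allowed.
\item For case~(3), $r$, $k$ and all $|e|$ are even, so every component's parity defect vanishes: $q(A,C)=0$ always. Only the fractional bound $k\le 2r/\rank(\HH)$ is needed. Alternatively, here I would try a direct Petersen-type argument: take an Euler tour of the even graph $B$, split it alternately, and iterate to halve degrees.
\end{itemize}

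The main obstacle is making the general factor theorem's oddness term precise for the hyperedge side. A hyperedge in $C$ with target set $\{0,2\}$ behaves like a parity constraint, not an interval constraint. I must check that the components counted by $q$ are exactly those whose boundary in $\HH$ is a genuine edge cut, so that $\lambda$- (or $\lambda^\ast$-) edge-connectivity applies. I would first try to choose $C$ to contain only vertices of $V(\HH)$ and $A$ only hyperedges (or show optimal $A,C$ can be assumed so). This reduces the counting to the graph-case argument of Theorem~\ref{regular_factors}.
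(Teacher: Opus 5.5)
\textbf{Verdict.} Your strategy is the paper's: the incidence graph, Lov\'asz's parity $(g,f)$-factor criterion, and weight $k/r$ on each edge of $B$. Your fractional lower bound on $f(A)-g(C)+\sum_{y\in C}\deg_{B-A}(y)$ amounts to the paper's discharging rules. The proposal nevertheless has a genuine gap in the count of odd components, plus a false claim in case~(3).

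\textbf{Main gap: singleton hyperedge components.} You assert that every component $D$ of $B-(A\cup C)$ is cut off by $B$-edges whose hyperedges form an edge cut of $\HH$, so $\lambda$-edge-connectivity gives at least $\lambda$ boundary edges. This fails for $D=\{e\}$, a single hyperedge-vertex all of whose vertices lie in $C$. Its boundary is just the $|e|$ incidences of $e$, no edge cut of $\HH$ is involved, and $|e|$ can be far smaller than $\lambda$.

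Such a component is $f$-odd exactly when $2+|e|$ is odd, i.e.\ $|e|$ odd, so $|e|\ge 3$. This is the one truly hypergraph phenomenon in the proof. In graphs $|e|=2$ and these singletons are never odd, which is why ``reducing to the graph-case argument'' misses them.

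Your proposed normalization does not remove them: they arise precisely when $C\subseteq V(\HH)$. Moreover, assuming $A\subseteq E(\HH)$ is unjustified and unnecessary, since vertices of $A\cap V(\HH)$ can simply send $k/r$ per edge.

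The fix uses the rank, not connectivity. $D$ receives $|e|(1-k/r)\ge |e|-2\ge 1$ from $C$, because $k|e|/r\le 2$. The paper phrases this as: $\rank(\HH)\ge 3$, so $k/r\le 2/3$, giving $3\cdot\frac13=1$.

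\textbf{Further incorrect or incomplete steps.}

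In case~(3), $q(A,C)$ need not vanish. All degrees in $B$ are even and $k$ is even, so $D$ is $f$-odd iff $e_B(D,C)$ is odd iff $e_B(D,A)$ is odd. Odd components therefore can exist. What is true is that each has at least one edge to $A$ and one to $C$, and so receives $\frac kr+\frac{r-k}{r}=1$. The Euler-tour alternative is not developed, and halving does not obviously give degree exactly $k$ on $V(\HH)$ and degree in $\{0,2\}$ on $E(\HH)$.

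The uniform bound $\lambda\min\{k/r,1-k/r\}\ge 1$ would need $k\ge r/\lambda$ for even $k$, which condition~(1) does not assume. The count must split by which side $D$ sees:
\begin{itemize}
\item Boundary in both $A$ and $C$: $D$ gets at least $1$ directly.
\item Boundary only in $C$: $D$ has at least $\lambda$ boundary edges, and at least $\lambda^*$ by parity when $k$ is even.
\item Boundary only in $A$: this forces $k$ odd, since for even $k$ such a $D$ is even; then $k\lambda/r\ge 1$ applies.
\end{itemize}

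Finally, applying edge-connectivity to $D$ requires $V(\HH)\setminus V(D)\neq\emptyset$. That needs either $C\neq\emptyset$ or at least two odd components. The paper handles $C=\emptyset$ in a separate claim before discharging.
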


While the upper bounds of $k$ involving $r$ and $\lambda$
in the conditions~(1) and~(2)
look similar to those in (2) and (3) of Theorem~\ref{regular_factors},
a new upper bound $\frac{2r}{\rank(\HH)}$ arises.
The ordinary graphs correspond to the case of $\rank(\HH)=2$
and this bound is trivial.
However,
for a hypergraph $\HH$ with $\rank(\HH) \ge 3$,
it affects the other upper bound.
We also note that there is no lower bound based on $\rank(\HH)$.
We discuss these phenomena in more detail in Section~4.

In the next section,
we introduce several tools we use in the proof of Theorem~\ref{main_theorem}.
We give a proof of Theorem~\ref{main_theorem} in Section~3.
In Section~4,
we make discussions on the obtained bounds.

For standard graph-theoretic terminology and notation
not explained in this paper,
we refer the reader to \cite{CLZ-2011}.
Just as we deal with hypergraphs with possible multiple edges,
when we say a graph,
we allow multiple edges.
For a graph $G$ and two disjoint  vertex sets $A$ and $B$ of $G$,
we denote by $E_G(A, B)$ the set of edges in $G$ joining $A$ to $B$,
and we define $e_G(A, B)$  by
$e_G(A, B)=|E_G(A, B)|$.
If $A$ is a singleton set consisting of $v$,
we write $E(v, B)$ and $e(v, B)$
instead of $E(\{v\}, B)$ and $e(\{v\}, B)$,
respectively.
For a vertex $v$ in a graph $G$,
we denote by $N_G(v)$ the neighborhood of $v$ in $G$.
If $S\subset V(G)$,
we often write $N_G(S)$ instead of $\bigcup_{v\in S} N_G(v)$.
If there is no fear of confusion,
we often identify a subgraph of $G$ with its vertex set.
For example,
if $A\subset V(G)$ and $H$ is a subgraph of $G-A$,
we often write $E_G(A, H)$ instead of $E_G(A, V(H))$.

%%%%%%%%%%%%%%%%%%%%%%%%%%%%%%%%%%%%%%%%%%%%%%
\section{Tools for Proof}
%%%%%%%%%%%%%%%%%%%%%%%%%%%%%%%%%%%%%%%%%%%%%%%%%%%%%

In this section,
we explain three tools we use in the proof of Theorem~\ref{main_theorem}~:~%
incidence graph,
parity $(g, f)$-factor
and discharging.

\subsection{Incidence Graph}

The {\em incidence graph} of a hypergraph $\HH$
is the bipartite simple graph $B=B(X, Y)$ with
$X=V(\HH)$ and $Y=E(\HH)$,
in which $v\in X$ and $e\in Y$ are adjacent if $v\in e$.
The incidence graph is a convenient tool
when we convert a problem on hypergraphs
to that on ordinary bipartite graphs.
See \cite{KLLZ-2022} and \cite{KLZ-2020},
for example.
Note that an alternate sequence of vertices and edges in $\HH$
is a Berge-path if and only if it is a path
joining two vertices in $X$ as a sequence of vertices in $B$.
Therefore,
$\HH$ is connected if and only if $B$ is connected.

Even if $\HH$ is $\lambda$-edge connected, $B$ is not always $\lambda$-edge-connected.
However, the edge-connectivity of $\HH$ gives some connectivity property
to its incidence graph.

\begin{lemma}
Let $\HH$ be a $\lambda$-edge-connected hypergraph
and let $B=B(X, Y)$ be the incidence graph of $\HH$ with $X=V(\HH)$ and $Y=E(\HH)$.
Let $\emptyset \ne R\subset V(B)$ and $D$ be a component of $B-R$.
If $X \setminus V(D) \ne \emptyset$ and $X\cap V(D) \ne \emptyset$, 
then $e_B(R, D)\ge \lambda$, in particular, if $R\cap X \ne \emptyset$ and  $D\ne \{y\}$ for any $y\in Y$, then $e_B(R, D)\ge \lambda$.
\label{Between_R_and_D}
\end{lemma}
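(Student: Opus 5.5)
We argue by translating the edges of $B$ between $R$ and $D$ into hyperedges of $\HH$ whose deletion disconnects $\HH$. Since $B$ is bipartite, every edge of $E_B(R,D)$ joins a vertex of $X$ to a vertex of $Y$. We classify these edges by their endpoint in $Y$. If the endpoint $e\in Y$ lies in $D$, the other endpoint is a vertex $v\in R\cap X$ with $v\in e$. If the endpoint $e$ lies in $R$, the other endpoint is a vertex $v\in D\cap X$. Let $S\subseteq E(\HH)$ be the set of all hyperedges $e\in Y$ that are endpoints of edges in $E_B(R,D)$. Each $e\in S$ contributes at least one edge to $E_B(R,D)$, so $|S|\le e_B(R,D)$.

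Suppose, for a contradiction, that $e_B(R,D)\le \lambda-1$. Then $|S|\le\lambda-1$, so by $\lambda$-edge-connectivity $\HH-S$ is connected. Choose $u\in X\cap V(D)$ and $w\in X\setminus V(D)$; these exist by hypothesis. There is a Berge-path $P$ in $\HH-S$ from $u$ to $w$, and it corresponds to a path in $B$ avoiding the vertices of $S$. Along this path we find a first $B$-edge $ab$ with $a\in V(D)$ and $b\notin V(D)$.

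We now show that $b$ lies in $R$. Since $D$ is a component of $B-R$, any $B$-neighbour of a vertex of $D$ that lies outside $D$ must belong to $R$. Hence $ab\in E_B(R,D)$, and so the $Y$-endpoint of $ab$ belongs to $S$. This contradicts the fact that the path avoids $S$. The $Y$-endpoint is either $a$, in the case $a\in Y\cap D$, or $b$, in the case $b\in Y\cap R$, and in both cases it lies on $P$. Therefore $e_B(R,D)\ge\lambda$.

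For the ``in particular'' part, we must check that $X\setminus V(D)\neq\emptyset$ and $X\cap V(D)\ne\emptyset$. The first holds because $R\cap X\neq\emptyset$ and $R$ is disjoint from $D$. For the second, suppose $D\subseteq Y$. Since $B$ is bipartite, the vertices of $D$ have no neighbours in $D$. But $D$ is connected, so $D$ would have to be a single vertex $\{y\}$ with $y\in Y$, which is excluded by hypothesis. Hence $D$ contains a vertex of $X$, and the first part applies.

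The main delicate point is the bookkeeping that the hyperedges in $S$ number at most $e_B(R,D)$, and that every crossing edge of the path indeed has its $Y$-endpoint in $S$. Both follow directly from bipartiteness, so there is no real obstacle.
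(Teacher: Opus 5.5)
Your proof is correct and follows essentially the same route as the paper. You take the set of $Y$-endpoints of the edges in $E_B(R,D)$, which has at most $e_B(R,D)$ elements, and show that deleting these hyperedges from $\HH$ separates a vertex of $X\cap V(D)$ from a vertex of $X\setminus V(D)$; your ``in particular'' part uses the same bipartiteness observation as the paper, and you spell out the step the paper leaves terse, namely that a $B$-path avoiding those $Y$-vertices cannot use any edge of $E_B(R,D)$ and so cannot leave $D$.
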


\begin{proof}
Let $E_B(R, D)=\{e_1,\dots, e_l\}$.
Since $B$ is a bipartite graph,
we can write $e_i=x_iy_i$,
where $x_i \in X$ and $y_i\in Y$
($1\le i\le l$).
Then $\{y_1,\dots, y_l\} \subset E(\HH)$.
Note that $y_1,\dots, y_l$ are not necessarily distinct.

Assume that $X \setminus V(D) \ne \emptyset$ and $X\cap V(D) \ne \emptyset$.  Take two vertices $v_1\in X\cap V(D)$ and $v_2\in X\setminus V(D)$. Then, since every edge of $B$ connecting $V(D)$  and $V(B)-V(D)$ is contained in  $E_B(R, D)$,
$v_1$ and $v_2$ are not joined by a path in $B - E_B(R,D)$,
namely,  two vertices $v_1$ and $v_2$ of $\HH$ are not joined by a Berge-path in 
$\HH-\{y_1,\dots, y_l\}$. This implies  $l \ge \lambda$ because $\HH$ is $\lambda$-edge-connected.
Therefore, $e_B(R, D)\ge \lambda$.

Next assume that $R\cap X \ne \emptyset$ and $D\ne\{y\}$ for any $y\in Y$.
Then, $X\setminus V(D) \ne \emptyset$, and since $B$ is a bipartite graph, $V(D)\cap X\ne\emptyset$.
Hence $e_B(R, D)\ge \lambda$.
\end{proof}

\subsection{Parity $\boldsymbol{(g, f)}$-Factor}

Lov\'{a}sz~\cite{Lov-70} introduced a $(g, f)$-factor as an extension of Tutte's
$f$-factor~\cite{Tutte-52, Tutte-54} and gave a criterion for its existence.
He later studied its variant,
a parity $(g, f)$-factor.

Let $G$ be a graph and let $g$ and $f$ be two integer-valued
functions defined on $V(G)$
satisfying $g(v)\le f(v)$ and $g(v)\equiv f(v)\pmod{2}$ for every $v\in V(G)$.
Then a spanning subgraph $F$ of $G$ is called
a {\em parity $(g, f)$-factor} 
if $g(v)\le \deg_F(v)\le f(v)$ and $\deg_F(v)\equiv f(v)\pmod{2}$
hold for every $v\in V(F)=V(G)$.
Lov\'{a}sz~\cite{Lov-72} gave a necessary and sufficient condition
for a graph $G$ to have a parity $(g, f)$-factor.
For an integer-valued function $h$ on $V(G)$ and $X\subset V(G)$,
we define $h(X)$ by 
\[h(X)=\sum_{x\in X} h(x), \quad \mbox{and}\quad \deg_G(X)=\sum_{x\in X}\deg_G(x).\]

\begin{theorem}[Lov\'{a}sz~\cite{Lov-70}, Theorem~6.1 in~\cite{AK-2011}]
Let $G$ be a graph, and let $g$ and $f$ be two integer-valued functions
defined on $V(G)$
satisfying $g(v)\le f(v)$ and $g(v)\equiv f(v)\pmod{2}$ for all $v\in V(G)$.
Then $G$ has a parity $(g, f)$-factor
if and only if for every disjoint vertex sets $S$ and $T$ of $G$,
\[
\eta(S, T) := f(S)+\deg_G(T)-g(T)-e_G(S, T)-q(S, T)\ge 0,
\]
where $q(S, T)$ denotes the number of components $D$
of $G-(S\cup T)$ that satisfy
\begin{equation}
f(D)+e_G(D, T) \equiv 1\pmod{2}.
\label{f_odd_component}
\end{equation}
\label{parity_gf_factor}
\end{theorem}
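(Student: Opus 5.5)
The statement is Lov\'{a}sz's parity $(g,f)$-factor theorem. I would prove necessity by direct counting and sufficiency by reduction to Tutte's $f$-factor theorem, taken in the version that allows loops, with a loop contributing $2$ to the degree.

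\emph{Necessity.} Let $F$ be a parity $(g,f)$-factor and fix disjoint $S,T$.
\begin{enumerate}
\item Since $\deg_F(v)\le f(v)$ on $S$, we get $f(S)\ge \sum_{v\in S}\deg_F(v)$.
\item Since $\deg_F(v)\ge g(v)$ on $T$, we get $\deg_G(T)-g(T)\ge \sum_{v\in T}\deg_{G-E(F)}(v)$.
\item Each edge of $E_G(S,T)$ lies either in $F$ (counted at $S$ in item 1) or in $G-E(F)$ (counted at $T$ in item 2). Hence $f(S)+\deg_G(T)-g(T)-e_G(S,T)$ is at least the number of $F$-edges leaving $S$ to $V(G)\setminus(S\cup T)$ plus the number of non-$F$-edges leaving $T$ to $V(G)\setminus(S\cup T)$.
\end{enumerate}
Now take a component $D$ of $G-(S\cup T)$ satisfying \eqref{f_odd_component}. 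Summing degrees in $F$ over $D$ and using $\deg_F(v)\equiv f(v)\pmod 2$ gives
\[ e_F(D,S)+e_F(D,T)\equiv f(D)\pmod 2 .\]
Substituting $e_F(D,T)=e_G(D,T)-e_{G-E(F)}(D,T)$ gives
\[ e_F(D,S)+e_{G-E(F)}(D,T)\equiv f(D)+e_G(D,T)\equiv 1\pmod 2 .\]
So every such $D$ receives at least one $F$-edge from $S$ or one non-$F$-edge from $T$. These edges are distinct for distinct components, so $\eta(S,T)\ge q(S,T)$ minus $q(S,T)$, that is, $\eta(S,T)\ge 0$.

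\emph{Sufficiency.} Build $G'$ from $G$ by attaching $(f(v)-g(v))/2$ loops at each vertex $v$; this is an integer because $g(v)\equiv f(v)\pmod 2$. An $f$-factor $F'$ of $G'$ yields a parity $(g,f)$-factor: deleting the loops from $F'$ lowers the degree at $v$ by an even number between $0$ and $f(v)-g(v)$. So the degree left at $v$ lies in $[g(v),f(v)]$ and has the parity of $f(v)$. Conversely, the loops let us pad any parity factor up to an $f$-factor. It therefore suffices to verify Tutte's condition for $G'$ and $f$:
\[ f(S)+\deg_{G'}(T)-f(T)-e_{G'}(S,T)-q'(S,T)\ge 0 \quad\text{for all disjoint } S,T. \]

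Next I translate each term back to $G$.
\begin{enumerate}
\item For $v\in T$ we have $\deg_{G'}(v)-f(v)=\deg_G(v)+(f(v)-g(v))-f(v)=\deg_G(v)-g(v)$, so $\deg_{G'}(T)-f(T)=\deg_G(T)-g(T)$.
\item Loops never join $S$ to $T$, so $e_{G'}(S,T)=e_G(S,T)$.
\item Loops neither change the components of $G'-(S\cup T)$ nor the quantities $e(D,T)$, so the Tutte odd-component count $q'(S,T)$ is exactly $q(S,T)$ in \eqref{f_odd_component}.
\end{enumerate}
Hence Tutte's expression for $G'$ equals $\eta(S,T)$, and the hypothesis gives the $f$-factor of $G'$.

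The main obstacle is making sure that Tutte's theorem is applied in a form valid for graphs with loops (and multiple edges), since the whole reduction rests on it. If only the loopless version is available, I would instead attach to $v$ a gadget consisting of a new vertex pair $a_v,b_v$ with edges $va_v$ and $vb_v$, a parallel edge $a_vb_v$, and $f(a_v)=f(b_v)=1$, using one gadget per unit of slack $(f(v)-g(v))/2$. An $f$-factor then uses either both edges $va_v,vb_v$ or the edge $a_vb_v$, so each gadget absorbs exactly $0$ or $2$ units of degree at $v$, as a loop would. I would then redo the bookkeeping in Tutte's condition, checking how the new vertices can be placed in $S$, in $T$, or in components.
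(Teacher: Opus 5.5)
The paper contains no proof of this statement. It quotes Lov\'asz's theorem (Theorem~6.1 in Akiyama--Kano) and uses it as a black box, adding only the remark that it stays valid when $g(v)<0$ or $f(v)>\deg_G(v)$. So the comparison is with a citation, and your proof is correct and self-contained modulo Tutte's $f$-factor theorem.

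\emph{Necessity.} Your count is right. Splitting $E_G(S,T)$ into $F$-edges and non-$F$-edges shows the left side is at least $e_F(S,U)+e_{G-E(F)}(T,U)$, where $U=V(G)\setminus(S\cup T)$. Your parity computation then shows that each $f$-odd component receives at least one of these edges, and distinct components receive distinct edges.

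\emph{Sufficiency.} Attaching $(f(v)-g(v))/2$ loops and applying Tutte's theorem is the standard reduction. Your term-by-term check correctly shows that Tutte's deficiency for $(G',f)$ coincides with $\eta(S,T)$. What this route buys over the citation is that it explains the generality the paper relies on. Negative $g$ just means more loops: in the paper $g=-2N$ on $Y$, giving $N+1$ loops at each $y$. Likewise $f(v)>\deg_G(v)$ is handled because Tutte's condition with $T=\{v\}$ detects it.

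Three minor points:
\begin{itemize}
\item Tutte's theorem does hold for general graphs with loops, each loop counting $2$ toward the degree. You can therefore rely on the loop version directly.
\item If you quote Tutte's theorem only for $f\ge 0$, handle $f(v)<0$ separately. In that case no factor exists, and $\eta(\{v\},\emptyset)\le f(v)<0$, so both sides of the equivalence fail.
\item Your gadget fallback is only a sketch. The deferred bookkeeping of where the new vertices go is the actual content of that route, so it should not be presented as part of the proof.
\end{itemize}
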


\vspace{-2ex}
We call a component $D$ satisfying~(\ref{f_odd_component})
an {\em $f$-odd component} of $G-(S\cup T)$.
Note that Theorem~\ref{parity_gf_factor} holds
even if $g(v) < 0$ or $f(v) > \deg_G(v)$ for some $v\in V(G)$.

We relate a Berge $k$-factor of a hypergraph $\HH$
with a parity $(g, f)$-factor of its incidence graph.
Suppose a hypergraph $\HH$ and a positive integer $k$ are given.
Let $B=B(X, Y)$ be the incidence graph of $\HH$ with $X=V(\HH)$ and $Y=E(\HH)$.
Then we define two integer-valued functions $g$ and $f$ on $V(B)$ as
\[
g(v)= \bigg\{
\begin{array}{ll} 
 k & \mbox{if~ $v \in X$,} \\ 
-2N & \mbox{if~ $v \in Y$,}
\end{array}  
\quad \mbox{and} \quad  
f(v)=  \bigg\{
\begin{array}{ll} 
 k & \mbox{if~ $v \in X$,} \\ 
 2 & \mbox{if~ $v \in Y$,}
\end{array} 
\]
where $N$ is a sufficiently large integer.
We call $(g, f)$ {\em the pair of associated functions}.

Assume that $B$ has a parity $(g, f)$-factor $F$.
Then $\deg_F(x)=k$ for every $x\in X$ and
$\deg_F(y)\in \{0, 2\}$ for every $y\in Y$. 
Then the multiset of edges 
\[ \bigcup_{y\in Y, \; \deg_F(y)=2}  \{x_1x_2 : N_F(y)=\{x_1,x_2\} \} \]
forms a Berge $k$-factor of $\HH$. 
Conversely, assume that $\HH$
 has a Berge $k$-factor $\FF'$, and  $\FF'$ is Berge-$G$ for some $k$-regular graph $G$ with $V(G)=X$. Then, by letting a bijection $f\colon E(G)\to E(\FF')$, which satisfies $e\subseteq f(e)$ for every $e\in E(G)$, the edge set 
 \[
\bigcup_{e=x_1x_2\in E(G)}
\{x_1y, x_2y: ~\{x_1,x_2\}\subseteq y=f(e)\in Y\}
 \]
and the vertex set $X\cup Y$ forms a parity $(g,f)$-factor of $B$. Hence the following lemma holds.

\begin{lemma}
Let $\HH$ be a hypergraph and let
$B=B(X, Y)$ be the incidence graph of $\HH$,
where
$X = V(\HH)$ and $Y=E(\HH)$.
Let $(g, f)$ be the pair of associated functions.
Then $\HH$ has a Berge $k$-factor
if and only if $B$ has a parity $(g, f)$-factor.
\label{k_factor_parith_gf_factor}
\end{lemma}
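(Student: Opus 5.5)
The plan is to verify both directions of the equivalence directly from the definitions, using the bijection between edges of $F$ at a hyperedge-vertex $y\in Y$ and the vertices of $y$ that are chosen. First I record what a parity $(g,f)$-factor $F$ of $B$ looks like for the pair of associated functions. For $x\in X$ we have $g(x)=f(x)=k$, so $\deg_F(x)=k$. For $y\in Y$ we have $-2N\le \deg_F(y)\le 2$ and $\deg_F(y)\equiv 2 \pmod 2$. Since degrees are nonnegative, this forces $\deg_F(y)\in\{0,2\}$; the lower bound $-2N$ is vacuous. Everything else is bookkeeping.

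\emph{Parity factor $\Rightarrow$ Berge $k$-factor.} Let $Y_2=\{y\in Y:\deg_F(y)=2\}$. Since $B$ is a simple graph, each $y\in Y_2$ has two distinct $F$-neighbours $x_1,x_2$, and both lie in $y$ because $x_iy\in E(B)$. Put $\rho(y)=x_1x_2$. Let $G$ be the multigraph on $X$ with edge multiset $\{\rho(y):y\in Y_2\}$; it is loopless because $x_1\neq x_2$. Let $\FF$ be the spanning subhypergraph with $E(\FF)=Y_2$. Then $\rho$, viewed inversely, is a bijection $E(G)\to E(\FF)$ with $\rho(y)\subseteq y$, so $\FF$ is Berge-$G$. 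For the degree of $x\in X$ in $G$: each of the $k$ edges of $F$ at $x$ goes to some $y$, and that $y$ is necessarily in $Y_2$. Each such $y$ contributes exactly one edge of $G$ at $x$, and distinct $F$-edges at $x$ correspond to distinct $y$'s. Hence $\deg_G(x)=k$, so $G$ is $k$-regular and $\FF$ is a Berge $k$-factor.

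\emph{Berge $k$-factor $\Rightarrow$ parity factor.} Let $\FF'$ be Berge-$G$ with $G$ $k$-regular, via the bijection $\phi\colon E(G)\to E(\FF')$ (the paper writes $f$ for this map, which I rename to avoid a clash with the function $f$). For each $e=x_1x_2\in E(G)$, put the two edges $x_1\phi(e)$ and $x_2\phi(e)$ of $B$ into $F$. These are genuine edges of $B$ because $\{x_1,x_2\}\subseteq\phi(e)$, and they are distinct because $G$ is loopless. Since $\phi$ is injective, different edges of $G$ use different vertices $y$, so no edge of $B$ is chosen twice. Consequently $\deg_F(y)=2$ if $y\in E(\FF')$ and $\deg_F(y)=0$ otherwise, which is even and at most $2$. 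Also $\deg_F(x)=\deg_G(x)=k$ for every $x\in X$. Thus $F$ is a parity $(g,f)$-factor.

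There is no real obstacle. The only points needing care are the multiplicity issues: that $B$ is simple even though $\HH$ may have multiple edges (each copy of a hyperedge is a separate vertex of $Y$), and that injectivity of the bijection prevents one hyperedge from serving two edges of $G$. Both were checked above.
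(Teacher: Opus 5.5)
Your proof is correct and follows the same route as the paper. You read off $\deg_F(x)=k$ and $\deg_F(y)\in\{0,2\}$ from the associated functions, turn each $y$ with $\deg_F(y)=2$ into the graph edge joining its two $F$-neighbours, and conversely replace each edge $x_1x_2$ of $G$ by the incidence edges $x_1\phi(e)$ and $x_2\phi(e)$, making explicit the multiplicity checks (simplicity of $B$, looplessness of $G$, injectivity of $\phi$) that the paper leaves implicit.
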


\subsection{Discharging}

Egawa,
Kano and Ozeki~\cite{EKO-2025}
devised a method to interpret Tutte's $f$-Factor Theorem~\cite{Tutte-52, Tutte-54}
in the context of discharging.
We tailor their method to fit for a parity $(g, f)$-factor.

Let $G$ be a graph and let $g$ and $f$ be two integer-valued functions
defined on $V(G)$.
Also,
let $S$ and $T$ be a pair of disjoint vertex sets of $G$
and let $\CC$ be a set of components of $G-(S\cup T)$.
Then define $\eta_0(S, T, \CC)$
by
\[
\eta_0(S, T, \CC) = f(S)+\deg_G(T)-g(T)-e_G(S, T)-|\CC|.
\]

In this setting,
we define the initial charge $\varphi$ corresponding to $\eta_0(S, T, \CC)$
as a function $\varphi\colon S\cup T\cup \CC\to\mathbb{Q}$ satisfying
\begin{alignat*}{3}
\varphi(v) & = f(v) & \quad & \text{if $v\in S$}, \\
\varphi(v) & = \deg_G(v)-g(v)-e_G(v, S) & \quad & \text{if $v\in T$}, \\
\varphi(C) & = -1  & \quad &  \text{if $C\in \CC$}.
\end{alignat*}
Note $\eta(S, T)=\eta_0(S, T, \DD)$,
where $\DD$ is the set of $f$-odd components of $G-(S\cup T)$.

\begin{lemma}
$\displaystyle \sum_{x\in S\cup T\cup \CC} \varphi (x)=\eta_0(S, T, \CC)$
\label{total_charge}
\end{lemma}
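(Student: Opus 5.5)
The identity is just a regrouping of the sum of the initial charges according to the definition of $\varphi$. I would split the sum over $S\cup T\cup\CC$ into three parts, over $S$, over $T$ and over $\CC$. This is legitimate because $S$ and $T$ are disjoint vertex sets and $\CC$ is a set of components, so the elements being charged are distinct objects.

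\textbf{The part over $S$.} By definition this contributes $\sum_{v\in S} f(v)=f(S)$.

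\textbf{The part over $T$.} This contributes
\[
\sum_{v\in T}\bigl(\deg_G(v)-g(v)-e_G(v,S)\bigr)=\deg_G(T)-g(T)-\sum_{v\in T}e_G(v,S).
\]
The only point needing a remark is that $\sum_{v\in T} e_G(v,S)=e_G(S,T)$. This holds because $S\cap T=\emptyset$. Each edge of $E_G(S,T)$ has exactly one end in $T$, so it is counted exactly once in the sum. This stays true with multiple edges, since $e_G$ counts edges with multiplicity and $G$ has no loops.

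\textbf{The part over $\CC$.} Each member of $\CC$ carries charge $-1$, so this part contributes $-|\CC|$.

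Adding the three parts gives
\[
f(S)+\deg_G(T)-g(T)-e_G(S,T)-|\CC|,
\]
which is exactly $\eta_0(S,T,\CC)$. There is no real obstacle here; the one step to state carefully is the double-counting identity for $e_G(S,T)$.
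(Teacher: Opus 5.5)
Your proof is correct and takes the same approach as the paper: split the sum into the parts over $S$, $T$ and $\CC$ and collect terms to get $f(S)+\deg_G(T)-g(T)-e_G(S,T)-|\CC|$. Your explicit justification that $\sum_{v\in T} e_G(v,S)=e_G(S,T)$, using that $S$ and $T$ are disjoint, is a step the paper uses without comment.
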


\begin{proof} 
\begin{align*}
\sum_{x\in S\cup T\cup\CC} \varphi (x)
&= \sum_{v\in S}\varphi(v)
+\sum_{v\in T}\varphi(v)
+\sum_{C\in \CC} \varphi(C)\\
&= \sum_{v\in S} f(v) +\sum_{v\in T} \big(\deg_G(v)-g(v)-e_G(v, S) \big)-|\CC|\\
&= f(S)+\deg_G(T)-g(T)-e_G(T, S)-|\CC| \\
& =\eta_0(S, T, \CC).
\end{align*}
\end{proof}

In the proof of Theorem~\ref{main_theorem},
we assume that a hypergraph $\HH$ satisfying
the hypothesis does not have a Berge $k$-factor
and take the incidence graph $B$ of $\HH$.
Then $B$ does not have a parity $(g, f)$-factor,
where $(g, f)$ is the pair of associated functions,
which implies $\eta(S, T) < 0$ for some $S, T\subset V(B)$
with $S\cap T=\emptyset$.
In this setting,
we set the initial charge corresponding to $\eta(S, T)$ and
perform discharging.
A detail is given in the next section.

Note that we can associate the initial charge
to every integer-valued functions $g$ and $f$ on $V(G)$
and any set of components of $G-(S, T)$.
By imposing suitable conditions to $g$,
$f$ and $\CC$,
we can apply this method not only to 
parity $(g, f)$-factors but also to
$(g, f)$-factors and $f$-factors.

%%%%%%%%%%%%%%%%%%%%%%%%%%%%%%%%%%%%%%%%%%%%%%%%
\section{Proof of Theorem~\ref{main_theorem}}
%%%%%%%%%%%%%%%%%%%%%%%%%%%%%%%%%%%%%%%%%%%%%%%%%%

In this section, we prove Theorem~\ref{main_theorem} 
using three tools introduced in Section~2.

\begin{proof}[Proof of Theorem~\ref{main_theorem}]

Let $\HH$ be a $\lambda$-edge-connected $r$-regular 
hypergraph with $\rank(\HH)=t$.
Assume that $\HH$ satisfies one of the conditions~(1),
(2) and (3) of Theorem~\ref{main_theorem},
but does not have a Berge $k$-factor.
Let $B=B(X, Y)$ be the incidence graph of $\HH$
with $X=V(\HH)$ and $Y=E(\HH)$.

We note $\deg_B(x)=\deg_{\HH}(x)=r$ for every $x\in X$.
Also,
since $k\le\frac{2}{t}r$,
we have 
\begin{align}
 \deg_B(y)\le \rank(\HH)=t \le \frac{2r}{k} \quad \mbox{for every} \quad y\in Y. 
 \label{eq-rank-t}
 \end{align}

Let $(g, f)$ be the pair of associated functions.
Since $\HH$ does not have a Berge $k$-factor,
$B$ does not have a parity $(g, f)$-factor by Lemma~\ref{k_factor_parith_gf_factor}.
Then, by Theorem~\ref{parity_gf_factor},
we have
$\eta(S, T) < 0$
for some disjoint vertex sets $S$ and $T$ of $B$.

Let $\DD=\{D_1,\dots, D_m\}$ be the set of
$f$-odd components of $G-(S\cup T)$, where $m=q(S, T)$.

\begin{claim}
$S\cup T \ne \emptyset$ and $T\subseteq X$.
\label{T_subset_X}
\end{claim}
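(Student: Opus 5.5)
We prove the two assertions separately. Both rest on the fact that $\eta(S,T)<0$ and on the definition of $\eta$ via Theorem~\ref{parity_gf_factor} with the pair of associated functions $(g,f)$.

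\emph{First, $S\cup T\ne\emptyset$.} Suppose $S=T=\emptyset$. Then $\eta(\emptyset,\emptyset)=-q(\emptyset,\emptyset)$, where $q$ counts the components $D$ of $B$ with $f(D)$ odd. Since $\HH$ is $\lambda$-edge-connected with $\lambda\ge 2$, it is connected, so $B$ is connected and has exactly one component, namely $B$ itself. Its $f$-value is
\[
f(V(B))=k|X|+2|Y|\equiv k|V(\HH)|\pmod 2 .
\]
\begin{itemize}
\item In conditions (1) and (3), $k$ is even.
\item In condition (2), $|V(\HH)|$ is even.
\end{itemize}
In every case $f(V(B))$ is even, so $q(\emptyset,\emptyset)=0$ and $\eta(\emptyset,\emptyset)=0$. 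This contradicts $\eta(S,T)<0$.

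\emph{Second, $T\subseteq X$.} The idea is that a vertex $y\in Y$ placed in $T$ contributes a huge positive term $-g(y)=2N$ to $\eta$. Suppose some $y\in T\cap Y$. Bound $\eta(S,T)$ from below term by term:
\begin{itemize}
\item $f(S)\ge 0$, since $f\ge 0$ everywhere.
\item For each $v\in T$, $\deg_B(v)-g(v)-e_B(v,S)\ge -g(v)$, because $e_B(v,S)\le\deg_B(v)$.
\item Hence the $T$-part of $\eta$ is at least $-g(T)=-k|T\cap X|+2N|T\cap Y|$.
\item $q(S,T)\le |V(B)|$.
\end{itemize}
Together these give
\[
\eta(S,T)\ge 2N-k|X|-|V(B)| .
\]
Since $N$ was chosen sufficiently large, for instance $2N>k|X|+|V(B)|$, this is positive, contradicting $\eta(S,T)<0$. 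Therefore $T\cap Y=\emptyset$.

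The only point needing care is fixing how large $N$ must be. Any bound depending only on $B$ suffices, and "sufficiently large" in the definition of the associated functions allows this choice.
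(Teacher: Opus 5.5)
Your proof is correct and follows the paper's argument: when $S\cup T=\emptyset$, the total $f(V(B))=k|X|+2|Y|$ is even, and when $T\cap Y\ne\emptyset$, the term $2N|T\cap Y|$ dominates everything else. You also make explicit three points the paper leaves implicit, all handled soundly: $B$ is connected (so $q(\emptyset,\emptyset)$ concerns a single component), $k|V(\HH)|$ is even under each of conditions (1)--(3), and there is an explicit threshold $2N>k|X|+|V(B)|$ fixed before $(S,T)$ is chosen.
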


\begin{proof}
If $S\cup T=\emptyset$, then $f(B)=k|X|+2|Y| \equiv 0\pmod{2}$,  and so $q(\emptyset, \emptyset)=0$ by (\ref{f_odd_component}). Thus $\eta(\emptyset, \emptyset)=q(\emptyset, \emptyset)=0$,
which contaradicts $\eta(S, T) < 0$. Hence $S\cup T \ne \emptyset$.

Assume $T\cap Y\ne\emptyset$.
Then
\[
\begin{split}
\eta(S, T) &= f(S)+\deg_B(T)-g(T)-e_G(S, T)-q(S, T)\\
& = f(S)+\deg_B(T) -k|T\cap X|+2N|T\cap Y|-e_G(S, T)-q(S, T)\\
& > 0, 
\end{split}
\]
because $N$ is sufficiently large.
This is a contradiction.
\end{proof}

By Claim~\ref{T_subset_X},
we have
\begin{equation}
\eta(S, T) = k|S\cap X|+2|S\cap Y|+r|T|-k|T|-e_G(S, T)-m < 0.
\label{deficiency}
\end{equation}

\begin{claim}
$T\ne\emptyset$.
\label{T_nonempty}
\end{claim}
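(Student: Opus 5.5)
The plan is to argue by contradiction: assume $T=\emptyset$ and show that then $\eta(S,T)\ge 0$, contradicting (\ref{deficiency}). With $T=\emptyset$, Claim~\ref{T_subset_X} gives $S\ne\emptyset$, and (\ref{deficiency}) becomes $k|S\cap X|+2|S\cap Y|<m$. So it suffices to show $m\le k|S\cap X|+2|S\cap Y|$, which I will do by discharging the initial charge $\varphi$ of Lemma~\ref{total_charge} from $S$ onto the $f$-odd components.

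\textbf{Step 1: parity of odd components.} Since $T=\emptyset$, a component $D$ of $B-S$ is $f$-odd iff $f(D)=k|D\cap X|+2|D\cap Y|$ is odd, i.e.\ iff $k$ is odd and $|D\cap X|$ is odd. Hence if $k$ is even (cases (1) and (3)), $m=0$, so $\eta(S,\emptyset)=k|S\cap X|+2|S\cap Y|\ge 0$, a contradiction. So we may assume $k$ is odd; we are then in case (2), where $|X|=|V(\HH)|$ is even and $\frac{1}{\lambda}r\le k$. In particular every $f$-odd component $D$ satisfies $X\cap V(D)\ne\emptyset$.

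\textbf{Step 2: each odd component sends at least $\lambda$ edges to $S$.} Let $D$ be an $f$-odd component. If $X\subseteq V(D)$, then $|D\cap X|=|X|$ is even, contradicting oddness. Hence $X\setminus V(D)\ne\emptyset$ and $X\cap V(D)\ne\emptyset$. Lemma~\ref{Between_R_and_D} with $R=S$ then gives $e_B(S,D)\ge\lambda$.

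\textbf{Step 3: discharging.} Each $x\in S\cap X$ has charge $k$ and exactly $r$ incident edges, so it sends $k/r$ along each edge. Each $y\in S\cap Y$ has charge $2$ and $\deg_B(y)\le t$ edges; by (\ref{eq-rank-t}) it can send $2/t\ge k/r$ along each edge. Every vertex of $S$ thus sends at least $k/r$ per edge without exceeding its charge. Each $f$-odd component therefore receives at least $\lambda k/r\ge 1$, using $k\ge r/\lambda$. Consequently
\[
m\le \sum_{D\in\DD}\frac{k}{r}\,e_B(S,D)\le k|S\cap X|+2|S\cap Y|,
\]
so $\eta(S,\emptyset)\ge 0$, contradicting (\ref{deficiency}). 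This proves $T\neq\emptyset$.

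The main point to get right is Step 2, namely excluding the component that contains all of $X$. This is exactly where the hypothesis that $|V(\HH)|$ is even enters. The rank bound $t\le 2r/k$ is what lets the $Y$-vertices of $S$ contribute at the same per-edge rate $k/r$ as the $X$-vertices in Step 3.
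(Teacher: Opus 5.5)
Your proof is correct. Its core is the same counting the paper uses: each vertex of $S$ sends at least $k/r$ per edge to the $f$-odd components, with $\lambda k\ge r$ and $t\le 2r/k$. The difference is in how you verify the hypothesis $X\setminus V(D)\ne\emptyset$ of Lemma~\ref{Between_R_and_D}.

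The paper does not use the evenness of $|V(\HH)|$ inside this claim. Instead, after ruling out even $k$, it splits on whether $S\cap X$ is empty.
\begin{itemize}
\item If $S\cap X\ne\emptyset$, the vertices of $S\cap X$ lie outside every $D_i$. The lemma then applies directly, and the weighted count finishes exactly as in your Step~3.
\item If $S\cap X=\emptyset$, it uses $\eta(S,\emptyset)=2|S\cap Y|-m<0$ with $|S\cap Y|\ge 1$ to get $m\ge 3$. So each $D_i$ misses the $X$-vertices of some other $D_j$, and the lemma applies. The count $m\lambda\le t|S\cap Y|<tm/2$ then gives $\lambda<t/2$, contradicting $t\le 2r/k\le 2\lambda$.
\end{itemize}

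Your observation removes this case split. An $f$-odd component has $|D\cap X|$ odd, so it cannot contain all of the even-sized set $X$, and a single weighted count covers both situations. This costs nothing extra: it is the same parity fact ($k|X|$ even) that the paper already needs in Claim~\ref{T_subset_X} to rule out $S\cup T=\emptyset$.

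The paper's version has the minor merit that Claim~\ref{T_nonempty} itself holds without reference to the parity of $|V(\HH)|$. Yours is shorter and more uniform.
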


\begin{proof}
Suppose to the contary that $T=\emptyset$. Then $S\ne \emptyset$. 
If $D_i=\{e\}$ for some $e\in Y=E(\HH)$,
then $f(D_i)+e_B(D_i, \emptyset)=f(e)+0=2$,
which contradicts (\ref{f_odd_component}).
Therefore,
$D_i\ne\{e\}$ for any $e\in Y$.
Since $B$ is bipartite,
this implies $D_i\cap X\ne\emptyset$
for every $1\le i\le m$.

Assume that $k$ is even.
If $m \ge 1$,
then
\[
f(D_1)+e_B(D_1, \emptyset)
=k|D_1\cap X|+2|D_1\cap Y|\equiv 0\pmod{2},
\]
which contradicts (\ref{f_odd_component}). Thus $m=0$.
Then $\eta(S, \emptyset)=k|S\cap X|+2|S\cap Y|\ge 0$,
a contradiction.
Therefore, we may assume that $k$ is odd, in particular, 
the condition (2) of Theorem~\ref{main_theorem} holds.

Assume that $S\cap X=\emptyset$.
Then it follows from (\ref{deficiency}) that 
$\eta(S,\emptyset) = 2|S\cap Y| -m < 0$.
This implies that $m >2$ and $|S\cap Y| < \frac{m}{2}$. Since $D_i\cap X\ne \emptyset$ and $\emptyset \ne D_j\cap X \subseteq X\setminus D_i$ for $i\ne j$, $e_B(D_i,S)=e_B(D_i,S\cap Y) \ge \lambda$ for every $1\le i \le m$ by Lemma~\ref{Between_R_and_D}. 
So we have by (\ref{eq-rank-t}) that 
\[ m\lambda \le \sum_{1\le i \le m} e_B(D_i,S\cap Y) \le \deg_B(S\cap Y) \le t |S\cap Y| < \frac{tm}{2} .\]
Thus $\lambda < \frac{t}{2}$.  On the other hand, $\lambda \ge \frac{r}{k}$ by the condition (2) of Theorem~\ref{main_theorem}, and so $\frac{r}{k}< \frac{t}{2}$.
This implies $\frac{2r}{k} <t$, which contradicts (\ref{eq-rank-t}).

Therefore $S\cap X\ne\emptyset$.
Hence $e_B(S, D_i)\ge\lambda$ for every $1\le i\le m$ by $D_i\cap X\ne \emptyset$ and Lemma~\ref{Between_R_and_D}.
Since $\deg_B(x)=r$ for every $x\in X$
and $\deg_B(y) \le t \le\frac{2r}{k}$
for every $y\in Y$,
we have
$\sum_{x\in S\cap X}\deg_B(x) = r|S\cap X|$
and $\sum_{y\in S\cap Y}\deg_B(y)\le\frac{2r}{k}|S\cap Y|$.
By the condition (2) of Theorem~\ref{main_theorem}, $\frac{k}{r}\lambda\ge 1$.
Thus, we have
\[
\begin{split}
\eta(S,\emptyset)
&= k|S\cap X|+2|S\cap Y|-m \\
& \ge \frac{k}{r}\sum_{x\in S\cap X}\deg_B(x)
+\frac{k}{r}\sum_{y\in S\cap Y}\deg_B(y)-m\\
& =\frac{k}{r}\sum_{v\in S}\deg_B(v)-m \\
& \ge\frac{k}{r}\sum_{i=1}^m e_B(S, D_i)-m\\
& \ge \frac{k}{r}\cdot \lambda m-m \\
& \ge 0. 
\end{split}
\]
This is a contradiction,
and the claim follows.
\end{proof}

By Claims~\ref{T_subset_X} and~\ref{T_nonempty},
we have $(S\cup T) \cap X\ne\emptyset$, which implies that a condition in Lemma~\ref{Between_R_and_D} holds.

Let $\varphi\colon S\cup T\cup\DD\to\mathbb{Q}$
be the initial charge corresponding to $(g, f)$.
Since $T\subseteq X$ by Claim~\ref{T_subset_X},
we have
\begin{align*}
\varphi (v) &= 
\begin{cases}
k & \text{for $v\in S\cap X$}\\
2 & \text{for $v\in S\cap Y$}\\
r-k-e_B(v, S) & \text{for $v\in T$}
\end{cases}\\
\intertext{and}
\varphi(D_i) &= -1\quad(1\le i\le m).
\end{align*}
We set the following rules
and perform discharging.
\begin{enumerate}
\item[(i)]
For each edge $xz$ joining $x\in S\cap X$ to $z\in V(D_i)$,
$x$ sends a charge of $\dfrac{k}{r}$ to $D_i$.
\item[(ii)]
For each edge $yz$ joining $y\in S\cap Y$ to $z\in V(D_i)$,
$y$ sends a charge of $\dfrac{2}{\deg_B(y)}$ to $D_i$.
\item[(iii)]
For each edge $xz$ joining $x\in T$ to $z\in V(D_i)$,
$x$ sends a charge of $\dfrac{r-k}{r}$ to $D_i$.
\item[(iv)]
For each edge $yx$ joining $y\in S\cap Y$ to $x\in T$,
$y$ sends a charge of $\dfrac{2}{\deg_B(y)}$ to $x$.
\end{enumerate}

Let $\varphi^*\colon S \cup T\cup \DD\to \mathbb{Q}$ be the charge
resulting from the discharge.

\begin{claim}
$\varphi^*(D_i)\ge 0$ for each $1\le i\le m$.
\label{D_part}
\end{claim}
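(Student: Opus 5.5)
The plan is to write the total charge that $D_i$ receives and bound it from below using the parity condition (\ref{f_odd_component}), the edge-connectivity via Lemma~\ref{Between_R_and_D}, and the rank bound (\ref{eq-rank-t}). Write $a=e_B(S,D_i)$ and $b=e_B(T,D_i)$. By (\ref{eq-rank-t}), every $y\in S\cap Y$ satisfies $\frac{2}{\deg_B(y)}\ge \frac{2}{t}\ge \frac{k}{r}$. So rules (i)--(iii) give
\[
\varphi^*(D_i)\ \ge\ -1+\frac{k}{r}\,a+\frac{r-k}{r}\,b .
\]
The basic observation is that $a\ge 1$ and $b\ge 1$ together already give $\varphi^*(D_i)\ge -1+\frac{k}{r}+\frac{r-k}{r}=0$. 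So it suffices to handle the cases $a=0$ or $b=0$, and there to exploit parity.

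First I would treat $D_i=\{y\}$ with $y\in Y$. Condition (\ref{f_odd_component}) reads $2+e_B(y,T)\equiv 1$, so $b$ is odd. All neighbours of $y$ lie in $S\cup T$, hence $a+b=\deg_B(y)\ge 2$. If $b=1$, then $a\ge 1$ and we are done. If $a=0$, then $b=\deg_B(y)\ge 3$ is odd and $b\le t\le 2r/k$. We need $b(r-k)\ge r$, i.e.\ $k\le r(b-1)/b$, and this follows from $k\le 2r/b\le r(b-1)/b$ since $b\ge3$.

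Next, suppose $D_i\cap X\ne\emptyset$. Since $(S\cup T)\cap X\ne\emptyset$, Lemma~\ref{Between_R_and_D} gives $a+b\ge\lambda$. I would split by the conditions of Theorem~\ref{main_theorem}.

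\emph{Condition (2).} Here $\lambda\ge r/k$ and $\lambda\ge r/(r-k)$, so the received charge is at least $\lambda\min\{\frac{k}{r},\frac{r-k}{r}\}\ge 1$.

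\emph{Condition (1).} Now $k$ is even, so (\ref{f_odd_component}) reduces to $b\equiv 1\pmod 2$, and in particular $b\ge1$. If $a\ge1$ we are done. If $a=0$, then $b\ge\lambda$ and $b$ is odd, so $b\ge\lambda^*$. Then $\frac{r-k}{r}\,b\ge\frac{r-k}{r}\lambda^*\ge1$, using $k\le(1-\frac1{\lambda^*})r$.

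\emph{Condition (3).} Again $b$ is odd, and it remains to exclude $a=0$. For this I would double count the edges of $B$ inside $D_i$. This count equals $r|D_i\cap X|-e_B(D_i\cap X,S\cup T)$ and also equals $\sum_{y\in D_i\cap Y}\bigl(\deg_B(y)-e_B(y,S\cup T)\bigr)$. Since $r$ and every $\deg_B(y)=|y|$ are even and $a=0$, reducing modulo $2$ gives $e_B(T,D_i\cap X)\equiv e_B(T,D_i\cap Y)$. Hence $b$ is even, a contradiction.

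The step I expect to be the main obstacle is condition (1) when $a=0$. There only $\lambda$-connectivity is available, and the parity of $b$ must be used to upgrade $\lambda$ to $\lambda^*$. A second delicate point is the parity double count in condition (3), which replaces any connectivity assumption.
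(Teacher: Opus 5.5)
Your proof is correct and follows essentially the paper's argument: the same lower bound $-1+\frac{k}{r}a+\frac{r-k}{r}b$ from the rank bound, reduction to $a=0$ or $b=0$, Lemma~\ref{Between_R_and_D} together with the parity of $b$ to upgrade $\lambda$ to $\lambda^*$, a handshake parity count for condition~(3), and the rank bound for singleton components $\{y\}$. The differences are only organizational. You split singleton versus non-singleton components first, and under condition~(2) you combine $a+b\ge\lambda$ with $\min\{\frac{k}{r},\frac{r-k}{r}\}$, where the paper separates the cases $a=0$ and $b=0$.
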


\begin{proof}
Since $\deg_B(y)\le t \le\frac{2r}{k}$ for $y\in S\cap Y$ by (\ref{eq-rank-t}),
the discharging rules~(i), (ii) and (iii) give
\begin{align}
\varphi^* (D_i) &=
-1 + \frac{k}{r}e_B(S\cap X, D_i)
+\sum_{yv\in E_B(S\cap Y, D_i)} \frac{2}{\deg_B(y)} ~ +\frac{r-k}{r}e_B(T, D_i) \nonumber \\
&\ge -1+\frac{k}{r}e_B(S\cap X, D_i)+\frac{k}{r}e_B(S\cap Y, D_i)
+\frac{r-k}{r}e_B(T, D_i) \nonumber \\
&= -1+\frac{k}{r}e_B(S, D_i)+\frac{r-k}{r}e_B(T, D_i). \label{eq-phiD}
\end{align}
Hence, if $e_B(S, D_i)\ge 1$ and $e_B(T, D_i)\ge 1$, then
$\varphi^*(D_i)\ge 0$.
Therefore, we may assume 
\begin{align}
e_B(S, D_i) = 0 \quad \mbox{or} \quad e_B(T, D_i)=0.
\label{eq-5}
\end{align}

Suppose that the condition~(3) of Theorem~\ref{main_theorem} holds.
Then every vertex in $B$ has an even degree,
and so
\[
0\equiv \sum_{v\in V(D_i)}\deg_B(v)=e_B(D_i, S\cup T)+2|E(D_i)|
\equiv e_B(D_i, S\cup T)\pmod{2}.
\]
On the other hand,
since $k$ is even and $D_i$ is an $f$-odd component,
\[
1 \equiv f(D_i)+e_G(D_i, T) = k|D_i\cap X|+2|D_i\cap Y|+e_B(D_i, T)
\equiv e_B(D_i, T)\pmod{2}.
\]
These imply $e_B(D_i, S)\equiv e_B(D_i, T)\equiv 1\pmod{2}$,
and hence $e_B(D_i, S)\ge 1$ and $e_B(D_i, T)\ge 1$,
which contradicts (\ref{eq-5}).
Therefore,
we may assume that the condition~(1) or~(2) of Theorem~\ref{main_theorem} holds.

First consider the case where $e_B(T, D_i)=0$ in (\ref{eq-5}).
Then
\[
1 \equiv f(D_i)+e_B(T, D_i)
=k|D_i\cap X|+2|D_i\cap Y| \equiv k|D_i\cap X|\pmod{2}.
\]
This implies that $k$ is odd and $D_i\cap X\ne\emptyset$.
Since $\emptyset \ne T\subset X\setminus D_i$ and $e_B(T, D_i)=0$,
we have $e_B(S, D_i)\ge \lambda$
by Lemma~\ref{Between_R_and_D}.
Also,
since $k$ is odd,
the condition~(2) holds,
which implies $\frac{k}{r}\lambda \ge 1$.
Therefore, by (\ref{eq-phiD}), we have
\[
\varphi^*(D_i) \ge -1+\frac{k}{r}e_B(S, D_i)\ge -1+\frac{k}{r}\lambda \ge 0.
\]

Next consider the case where $e_B(S, D_i)=0$ in (\ref{eq-5}).
Assume first that $D_i\cap X\ne\emptyset$.
Then, since $\emptyset \ne T\subset X\setminus D_i$ by Claims~\ref{T_subset_X} and \ref{T_nonempty}, we have $e_B(T, D_i)\ge \lambda$
by Lemma~\ref{Between_R_and_D}.
Moreover,
since 
\[
1  \equiv f(D_i)+e_B(D_i, T) \equiv k|D_i\cap X|+e_B(D_i, T)\pmod{2},
\]
we have $e_B(T, D_i)\ge \lambda^*$ 
if $k$ is even.
Then
since $k\le \left(1-\frac{1}{\lambda^*}\right)r$
if $k$ is even and
$k\le\left(1-\frac{1}{\lambda}\right)r$ if $k$ is odd,
we have by (\ref{eq-phiD}) that

\[
\begin{split}
\varphi^*(D_i) & \ge -1+\frac{r-k}{r}e_B(T, D_i)\\
&\ge
\begin{cases}
-1+\frac{r-k}{r}\lambda^* ~~ \ge 0 &\quad  \text{if $k$ is even}\\
-1+\frac{r-k}{r}\lambda ~~~ \ge 0 & \quad \text{if $k$ is odd.}
\end{cases}
\end{split}
\]

Next assume that  $D_i\cap X=\emptyset$.
Then $D_i = \{e\}$ for some $e\in Y$.
By the assumption that $|e|\ge 2$, we  have
 $e_B(\{e\}, T) =\deg_B(e)=|e| \ge 2$.
On the other hand,
since $D_i=\{e\}$ is an $f$-odd component,
we have 
\[
1\equiv f(e)+e_B(\{e\}, T)
=2+e_B(\{e\}, T)\equiv e_B(\{e\}, T)\pmod{2},
\]
and thus $e_B(\{e\}, T)\ge 3$.
This yields $|e|\ge 3$ and hence $t=\rank(\HH)\ge 3$.
Since each of the conditions~(1) and~(2) in Theorem~\ref{main_theorem} gives $k \le\frac{2}{t}r$,
we have $\frac{k}{r}\le\frac{2}{t}\le \frac{2}{3}$.
Therefore, it follows from (\ref{eq-phiD}) that
\[
\varphi^*(D_i)\ge -1+\frac{r-k}{r} e_B(\{e\}, T)
\ge -1+\left(1-\frac{2}{3}\right)\cdot 3=0.
\]
Therefore the claim is proved. 
\end{proof}

Let $U= V(D_1) \cup V(D_2) \cup \ldots \cup V(D_m)$.

\begin{claim}
For every $v\in S$,
$\varphi^*(v)\ge 0$.
\label{S_part}
\end{claim}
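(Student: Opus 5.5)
We want $\varphi^*(v)\ge 0$ for every $v\in S$. The idea is that each vertex of $S$ sends out, in total, at most its initial charge. Vertices of $S$ only lose charge: rules (i), (ii) and (iv) send charge out of $S$, and nothing is sent into $S$. So it is enough to bound the total amount sent by each $v\in S$ by $\varphi(v)$. We split into the cases $v\in S\cap X$ and $v\in S\cap Y$.

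\emph{Case $v=x\in S\cap X$.} Only rule (i) applies. For each edge from $x$ to some $z\in U$, vertex $x$ sends $\frac{k}{r}$. The number of such edges is $e_B(x,U)\le \deg_B(x)=r$. Hence
\[
\varphi^*(x)\ge k-\frac{k}{r}\,e_B(x,U)\ge k-\frac{k}{r}\cdot r=0 .
\]

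\emph{Case $v=y\in S\cap Y$.} Rules (ii) and (iv) apply. Vertex $y$ sends $\frac{2}{\deg_B(y)}$ along each of its edges into $U\cup T$, and nothing along edges to $S$ or to even components. Therefore
\[
\varphi^*(y)=2-\frac{2}{\deg_B(y)}\bigl(e_B(y,U)+e_B(y,T)\bigr)\ge 2-\frac{2}{\deg_B(y)}\deg_B(y)=0 .
\]
We need $\deg_B(y)\ge 1$ for this, which holds because $|y|\ge 2$.

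Both cases give $\varphi^*(v)\ge 0$, which proves the claim. There is no real obstacle here. The only point to check is that no rule transfers charge into $S$, and the rules as stated confirm this.
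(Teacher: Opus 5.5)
Your proposal is correct and follows essentially the same route as the paper: for $x\in S\cap X$ you bound the outflow under rule (i) by $\frac{k}{r}e_B(x,U)\le \frac{k}{r}\deg_B(x)=k$, and for $y\in S\cap Y$ you bound the outflow under rules (ii) and (iv) by $\frac{2}{\deg_B(y)}\bigl(e_B(y,U)+e_B(y,T)\bigr)\le 2$. The paper's argument is exactly these two estimates. Your extra remarks, that no rule sends charge into $S$ and that $\deg_B(y)\ge 2>0$, are harmless and correct checks.
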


\begin{proof}
For $x\in S\cap X$,
by the discharging rule~(i),
we have
\[
\varphi^*(x)=k-\frac{k}{r}e_B(x, U)\ge k-\frac{k}{r}\deg_B(x)=k-\frac{k}{r}\cdot r=0.
\]

For $y\in S\cap Y$,
by the discharging rules~(ii) and~(iv),
we have
\[
\varphi^*(y)= 2-\frac{2}{\deg_B(y)}(e_B(y, T)+e_B(y, U))
\ge 2-\frac{2}{\deg_B(y)}\deg_B(y)=0.
\]
\end{proof}

\begin{claim}
For every $x\in T$,
$\varphi^*(x)\ge 0$.
\label{T_part}
\end{claim}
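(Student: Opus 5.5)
We compute $\varphi^*(x)$ for $x\in T$ directly from the initial charge and discharging rules (iii) and (iv). The vertex $x\in T$ starts with $r-k-e_B(x,S)$. Since $T\subseteq X$ by Claim~\ref{T_subset_X} and $B$ is bipartite, every neighbour of $x$ lies in $Y$. Hence $e_B(x,S)=e_B(x,S\cap Y)$, and $x$ has no neighbour in $S\cap X$. By rule (iii), $x$ loses $\frac{r-k}{r}$ for each edge to $U$. By rule (iv), it gains $\frac{2}{\deg_B(y)}$ for each edge $yx$ with $y\in S\cap Y$. No other rule involves $x$. Therefore
\[
\varphi^*(x)= r-k-e_B(x,S\cap Y)-\frac{r-k}{r}e_B(x,U)+\sum_{yx\in E_B(S\cap Y,\,x)}\frac{2}{\deg_B(y)} .
\]

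The key inequality is (\ref{eq-rank-t}): $\deg_B(y)\le t\le \frac{2r}{k}$, so $\frac{2}{\deg_B(y)}\ge \frac{k}{r}$. Each edge from $x$ to $S\cap Y$ therefore has net contribution at least $-1+\frac{k}{r}=-\frac{r-k}{r}$. This gives
\[
\varphi^*(x)\ge r-k-\frac{r-k}{r}\bigl(e_B(x,S\cap Y)+e_B(x,U)\bigr).
\]

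Since $x$ has no neighbours in $S\cap X$, nor in $T\subseteq X$, we get $e_B(x,S\cap Y)+e_B(x,U)\le \deg_B(x)=r$. Hence $\varphi^*(x)\ge r-k-\frac{r-k}{r}\cdot r=0$.

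There is no real obstacle. The only point needing care is bipartiteness: it rules out $T$–$T$ and $T$–$(S\cap X)$ edges, so the edges to $S\cap Y$ and to $U$ together number at most $\deg_B(x)=r$. It is also what lets us use $\frac{2}{\deg_B(y)}\ge\frac{k}{r}$, exactly as in Claim~\ref{D_part}.
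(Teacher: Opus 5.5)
Your proof is correct and is essentially the paper's argument: the same expression for $\varphi^*(x)$ from rules (iii) and (iv), the same use of $N_B(T)\cap S\subseteq Y$ and $\frac{2}{\deg_B(y)}\ge\frac{k}{r}$ (from (\ref{eq-rank-t})), and then $e_B(x,S)+e_B(x,U)\le\deg_B(x)=r$. One small point of wording: the inequality $\frac{2}{\deg_B(y)}\ge\frac{k}{r}$ comes from (\ref{eq-rank-t}), not from bipartiteness; bipartiteness only ensures that every edge from $x$ into $S$ ends in $S\cap Y$, so each such edge earns the rule (iv) refund.
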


\begin{proof}
Note that since $T\subset X$,
we have $N_B(T)\cap S\subset Y$.
Also,
$\deg_B(y)\le t \le \frac{2r}{k}$ for every $y\in Y$.
Hence, for $x\in T$, we have
\begin{align*}
\varphi^*(x)
&= r-k-e_B(x, S)-\frac{r-k}{r}e_B(x, U)+\sum_{xy\in E_B(x, S)}\frac{2}{\deg_B(y)}\\
& \ge r-k-e_B(x, S)-\frac{r-k}{r}e_B(x, U)+\frac{k}{r}e_B(x, S)\\
&= r-k-\frac{r-k}{r}\bigl(e_B(x, S)+e_B(x, U)\bigr)\\
&\ge r-k-\frac{r-k}{r}\deg_B(x) \\
& =r-k-\frac{r-k}{r}\cdot r=0.
\end{align*}
\end{proof}

By Claims~\ref{D_part},
\ref{S_part} and \ref{T_part},
we have $\varphi^*(z)\ge 0$ for every $z\in S\cap T\cup \DD$,
and hence $\sum_{z\in S\cup T\cup\DD}\varphi^*(z)\ge 0$.
This contradicts $\sum_{z\in S\cup T\cup\DD}\varphi^*(z)
=\sum_{z\in S\cup T\cup \DD}\varphi(z) < 0$.
Consequently, the proof of Theorem~\ref{main_theorem} is complete.

\end{proof}

\section{Discussion}

In this section,
we discuss conditions~(1),
(2) and~(3) of Theorem~\ref{main_theorem} and compare them with the statements
of Theorem~\ref{regular_factors}.

The conditions~(1) and~(2) give two upper bounds,
the one based on the edge-connectivity
and the other based on the rank.
Though the latter bound looks technical in the proof,
it is actually not as we show in the following theorem.

\begin{theorem}
For every integer $k, r, t$
with $r\ge 2$,
$t\ge 2$ and $1\le k\le r$,
there exist infinitely many  $r$-edge-connected
$r$-regular $t$-uniform hypergraphs
which do not have a Berge $k$-factor for
any $k$ with $k > \frac{2}{t}r$.
\label{new_bound_essential}
\end{theorem}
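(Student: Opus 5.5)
The obstruction is a counting argument. Suppose a hypergraph $\HH$ with $n$ vertices has a Berge $k$-factor $\FF$. Then $\FF$ is Berge-$G$ for a $k$-regular graph $G$ on $V(\HH)$. The bijection $E(G)\to E(\FF)$ shows $|E(\FF)|=|E(G)|=kn/2$. Since $E(\FF)\subseteq E(\HH)$, we get
\[
\frac{kn}{2}\le |E(\HH)|.
\]
If $\HH$ is $r$-regular and $t$-uniform, double counting incidences gives $t|E(\HH)|=rn$, so $|E(\HH)|=rn/t$. Hence $kn/2\le rn/t$, that is, $k\le \frac{2}{t}r$. So \emph{every} $r$-regular $t$-uniform hypergraph has no Berge $k$-factor when $k>\frac{2}{t}r$. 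It remains only to exhibit infinitely many such hypergraphs that are $r$-edge-connected.

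For the construction I would aim for the most symmetric objects available. For each $m$ with $t\mid m$, take a set of $m$ vertices and, for each of $r$ "layers", partition the vertices into $m/t$ blocks of size $t$; the edges are all the blocks of all the layers. This is $r$-regular and $t$-uniform, with multiple edges allowed. I would choose the partitions so that the hypergraph is $r$-edge-connected. A clean choice is to use the vertex set $\integerset_t^{\,s}$ or a cyclic group $\integerset_m$ and let the layers be cosets of different subgroups or shifted intervals. Alternatively, one can take each layer to be the same partition up to a cyclic shift by $j$ positions ($j=0,\dots,r-1$), with $m$ large compared with $rt$.

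The main obstacle is verifying $r$-edge-connectivity. Deleting any set $Z$ of at most $r-1$ edges, I must show the rest is connected. The plan is to exhibit $r$ Berge-paths between any two vertices $u,v$ that are pairwise edge-disjoint, using one layer "predominantly" per path. Menger's theorem for the incidence graph then handles this: edge-disjoint Berge-paths correspond to paths in $B$ sharing no $Y$-vertex, so it suffices to show that no set of $r-1$ vertices of $Y$ separates $u$ from $v$ in $B$.

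A cheaper route is to argue via cuts. For any partition $V=A\cup \bar A$ with both parts nonempty, I would show at least $r$ edges meet both sides. With shifted-interval layers on a cycle $\integerset_m$, each layer alone forms a connected structure together with the next layer. More simply, I would replace the construction by taking the layers $\{\{it+j+1,\dots,it+j+t\}\}$ on $\integerset_m$ for shifts $j=0,\dots,r-1$ (when $r\le t$), or repeat blocks otherwise. In either case I would check, as in the interval-on-a-cycle argument for ordinary circulant graphs, that any proper nonempty $A$ has at least two "boundary points" on the cycle. Each boundary point is covered by at least one crossing edge from every layer whose block boundary does not coincide with it. Counting these crossing edges over the layers should give at least $r$ crossing edges. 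I expect this counting to need care in the small cases ($t=2$, or $r>t$), where I would fall back on taking a known $r$-edge-connected $r$-regular graph and, for $t\ge3$, blowing each of its edges up by adding vertices in a balanced way.
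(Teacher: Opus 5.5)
\textbf{There is a genuine gap: you never prove $r$-edge-connectivity.} Your counting obstruction is correct and is the paper's own; the paper writes it as $k|X|=|E(F)|\le 2|Y|$ in the incidence graph. Your shifted-interval construction on $\integerset/tm\integerset$ is also exactly the paper's. But connectivity is the only nontrivial part of the theorem, because the bound $k\le\frac{2}{t}r$ holds for every $r$-regular $t$-uniform hypergraph.

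Your cut count does not work as stated, for two reasons. Summing over boundary points and layers double counts when one block contains several boundary points of $A$. It also ignores the decisive case where $A$ is a union of blocks of one partition; then all layers with that shift, up to $\lceil r/t\rceil$ of them, contribute no crossing edge.

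What is needed is a dichotomy:
\begin{itemize}
\item If two boundary points of $A$ are incongruent modulo $t$, then in every layer some block contains one of them in its interior. This gives at least $r$ crossing edges.
\item If all boundary points are congruent to some $i$ modulo $t$, the layers with shift $\equiv i$ contribute nothing. But distinct boundary points are then at cyclic distance at least $t$, so they lie in distinct blocks of every other layer. This gives at least $2\bigl(r-\lceil r/t\rceil\bigr)\ge r$ crossing edges when $t\ge 3$ and $r\ge 2$.
\end{itemize}
This second bound is exactly the paper's inequality $|E'|\ge 2\bigl(q(t-1)+\max\{\alpha-1,0\}\bigr)$. The paper obtains it by routing Berge-paths both ways around the cycle, through an untouched layer $E_0$ and the edges straddling consecutive blocks of $E_0$.

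Contrary to your expectation, $r>t$ causes no difficulty. The construction fails only for $t=2$ and odd $r$, where it has an $(r-1)$-edge cut. That case is vacuous because $k\le r$, so any infinite family of $r$-edge-connected $r$-regular graphs suffices there.

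\textbf{Your blow-up fallback is left vague, but it can be made into a complete and arguably simpler proof.}
\begin{itemize}
\item Start from an $r$-edge-connected $r$-regular graph $G$ of even order $n$, e.g.\ a Harary graph.
\item Add $t-2$ new vertices to each edge. Choose them so that each of the $(t-2)n/2$ new vertices lies in exactly $r$ enlarged edges. This is a biregular bipartite incidence, obtainable by assigning cyclically consecutive blocks of new vertices to the edges.
\item The result is $r$-regular and $t$-uniform.
\item Delete at most $r-1$ hyperedges. The original vertices stay connected through the surviving edges of $G$, and every new vertex still lies in a surviving hyperedge. So $r$-edge-connectivity is inherited with no case analysis.
\end{itemize}
As written, however, neither route is carried out, so your proposal establishes only the easy half of the statement.
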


\begin{proof}
Let $k$ be an integer with $k > \frac{2}{t}r$.
If $t = 2$,
then we have $k > r$,
and no $r$-regular graph has a $k$-factor.
Therefore,
we may assume $t\ge 3$.

For an integer $m$ with $m\ge 2$,
we construct a $t$-uniform hypergraph $\HH$ of order $tm$ in the following way.

Let $V(\HH) := \integerset/ tm\integerset=\{0, 1, \dots, tm-1\}$.
For suffices $i$ and $j$ with $0\le i\le r-1$ and $j\in \integerset/m\integerset$,
we define
$e_{i, j}=\{i+jt+l\colon 0\le l \le t-1\}$.
Let $E_i=\{e_{i, j} \colon 0\le j\le m-1\}$ (see Fig.\ref{number_line}).
Note that each $e_{i, j}$ contains $t$ vertices
and that each $E_i$ is a partition of $V(\HH)$.
Note also that $E_i=E_j$ if $i\equiv j\pmod{t}$.
Let $E(\HH)$ be the multiset defined by $E(\HH)=\bigcup_{i=0}^{r-1} E_i$.
By the construction,
$\HH$ is $t$-uniform.
Moreover,
since each $E_i$ is a partition of $V(\HH)$,
every vertex in $\HH$ is contained in exactly one edge in $E_i$.
This yields $\deg_{\HH}(v)=r$ and hence $\HH$ is $r$-regular.

\begin{figure}[t]
\centering
\includegraphics[width=420.6pt]{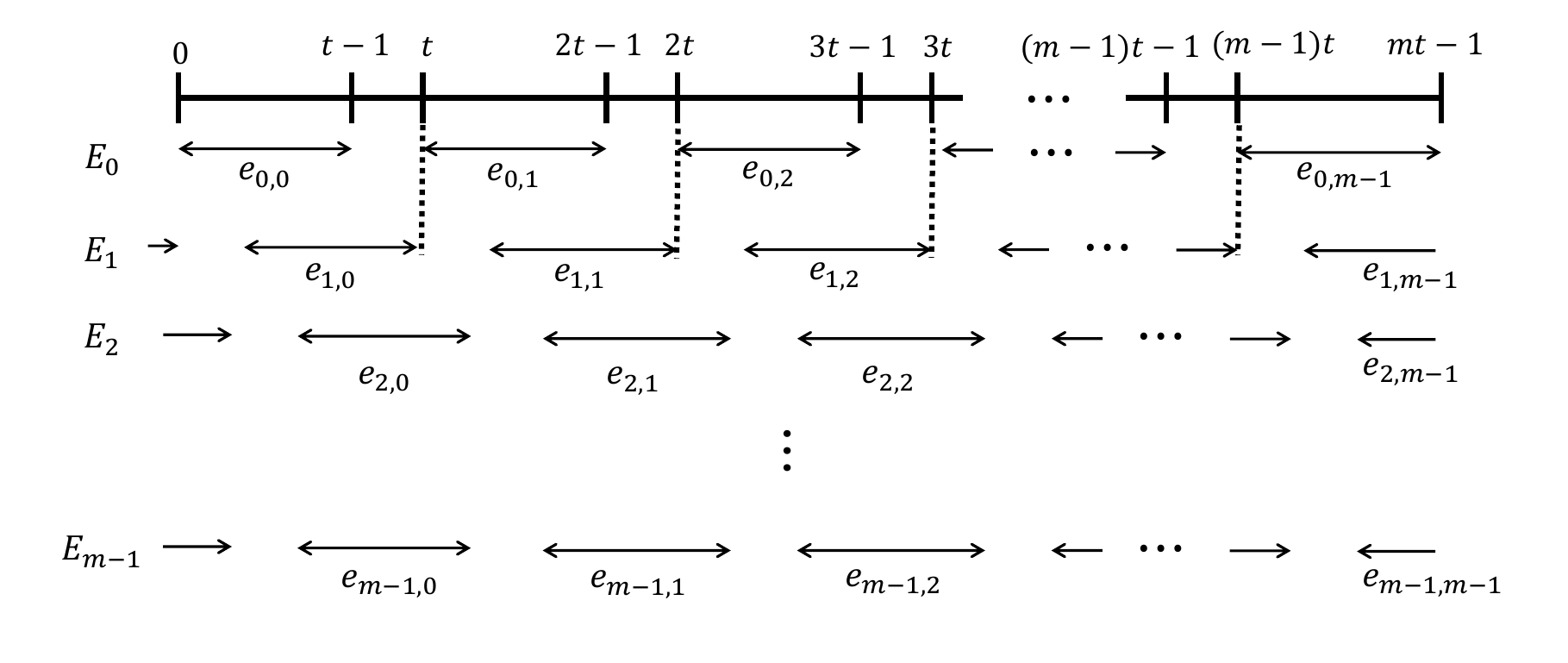}
%\vspace{-3zw}
\caption{The segments (two-way arrows) in the figure represent the edges of $\HH$.
The segments of $E_i$ are obtained from the segments of $E_{i-1}$ by cyclically shifting them one unit to the right.
}
\label{number_line}
\end{figure}

We prove that $\HH$ is $r$-edge-connected.
Let $E'\subset E(\HH)$ with $|E'|\le r-1$ and assume $\HH-E'$ is not connected.
Take $v, w\in V(\HH)$ such that
there does not exist a Berge-path
connecting $v$ and $w$ in $\HH-E'$.
Since $|E'|\le r-1$,
we can take $E_i$ with $E_i\cap E'=\emptyset$.
By the cyclic symmetry of the suffices of $e_{i, j}$,
we may assume $E_0\cap E'=\emptyset$.
Also by the symmetry of $v$ and $w$,
we may assume $v\in e_{0, j}$ and $w\in e_{0, j'}$ for some $j$ and $j'$
with $0\le j\le j'\le m-1$.
However,
if $j=j'$,
then $v, e_{0, j}, w$ is a Berge-path of length~$1$
in $\HH-E'$,
a contradiction.
Therefore,
we have $j < j'$.

For $0\le i\le m-1$,
define the multiset $F_i$ By
$F_i=\{e\in E(\HH)\colon \{it-1, it\}\subset e\}$.
By the definition of $E_i$,
$E_p\cap F_i\ne\emptyset$ if and only if $p\not\equiv 0\pmod{t}$.

Choose $f_i\in F_i$ for each $i$ with $0\le i\le m-1$ and
define two Berge-path $P$ and $Q$ by
\begin{align*}
P &= v, e_{0, j}, (j+1)t-1, f_{j+1}, (j+1)t, e_{0, j+1}, (j+2)t-1, f_{j+2}, (j+2)t,\\
& \dots, (j'-1)t-1, f_{j'-1}, (j'-1)t, e_{0,j'-1}, j't-1, f_{j'}, j't, e_{0, j'}, w\text{ and}\\
Q &= v, e_{0, j}, jt, f_j, jt-1, e_{0, j-1}, (j-1)t, f_{j-1}, (j-1)t-1, e_{0, j-2}, (j-2)t,\\
& \dots, (j'+2)t-1, e_{0, j'+1}, (j'+1)t, f_{j'+1}, (j'+1)t-1, e_{0, j'}, w.
\end{align*}
Since both $P$ and $Q$ connect $v$ and $w$,
we have $E'\cap E(P)\ne\emptyset$
and $E'\cap E(Q)\ne\emptyset$
for any choice of $f_i\in F_i$.
Therefore,
$F_{i_1}\subset E'$ for some $i_1$ with $i_1\in\{j+1, j+2,\dots, j'\}$
and $F_{i_2}\subset E'$ for some $i_2$ with $i_2\in\{j'+1, j'+2,\dots, j\}$.
Let $q$ and $\alpha$ be integers with $q\ge 0$,
$0\le\alpha\le t-1$ and $r=qt+\alpha$.
Since $F_p\cap E_i\ne\emptyset$ if and only if $p\not\equiv 0\pmod{t}$,
we have $|F_{i_1}|=|F_{i_2}|=q(t-1)+\max\{\alpha-1, 0\}$
and hence $|E'|\ge 2\bigl(q(t-1)+\max\{\alpha-1, 0\}\bigr)$.
On the other hand,
since $|E'|\le r-1=qt+\alpha-1$,
we have $qt+\alpha-1\ge 2\bigl(q(t-1)+\max\{\alpha-1, 0\}\bigr)$.

First,
suppose $\alpha\ge 1$.
Then we have $qt+\alpha-1\ge 2\bigl(q(t-1)+\alpha-1\bigr)$,
which is equivalent to $q(t-2)+\alpha\le 1$.
Since $\alpha\ge 1$ and $t\ge 3$,
this implies $\alpha = 1$ and $q = 0$.
However,
this yields $r=qt+\alpha=1$,
contradicting the hypothesis of $r\ge 2$.

Next,
suppose $\alpha = 0$.
Then we have $qt-1 \ge 2\bigl(q(t-1)\bigr)=2qt-2q$,
which yields $q(t-2)+ 1\le 0$.
However,
this is again a contradiction since $q\ge 0$ and $t\ge 3$.
Thus,
$\HH$ is $r$-edge-connected.

Suppose $\HH$ has a Berge $k$-factor.
Let $B=B(X, Y)$ be the incidence graph of $\HH$
with $X=V(\HH)$ and $Y=E(\HH)$.
By Lemma~\ref{k_factor_parith_gf_factor},
$B$ contains a spanning subgraph $F$
satisfying $\deg_F(x)=k$ for every $x\in X$
and $\deg_F(y)\in\{0, 2\}$
for every $y\in Y$.
This implies $k|X| =|E(F)| \le 2|Y|$.
On the other hand,
we have $|X|=|V(\HH)|=tm$
and $|Y|=E(\HH)=rm$.
Therefore,
we have $ktm \le 2mr$,
which implies $k\le r\cdot \frac{2}{t}$.
This contradicts the assumption of the theorem.
\end{proof}

Let $\rank(\HH)=t$. We compare the two upper bounds
$\left(1-\frac{1}{\lambda^*}\right)r$ and $\frac{2}{t}r$ in the condition (1) 
and $\left(1-\frac{1}{\lambda}\right)r$ and $\frac{2}{t}r$ in the condition (2) of Theorem~\ref{main_theorem}.
Note that since $\lambda\ge 2$,
we have $\lambda^*\ge 3$.
On the other hand,
if $t\ge 3$,
then $\frac{2}{t}\le \frac{2}{3} \le 1-\frac{1}{\lambda^*}$.
Thus,
if $k$ is even,
the upper bound $\frac{2}{t}r$ always beat $\left(1-\frac{1}{\lambda^*}\right)r$.
Therefore,
$\left(1-\frac{1}{\lambda^*}\right)r$
is a meaningful bound only if $t=2$,
the case of ordinary graphs.

For odd $k$,
the edge-connectivity matters only if $1-\frac{1}{\lambda} < \frac{2}{t}\le \frac{2}{3}$.
This implies $\lambda = 2$ and $t \le 3$.
Therefore,
$\left(1-\frac{1}{\lambda}\right)r$
is a meaningful bound only for
ordinary graphs and hypergraphs with edge-connectivity~$2$
and rank $3$.
It demonstrates a sharp contrast between ordinary graphs
and hypergraphs of rank $3$ or more.

Next,
we discuss the lower bound in the condition~(2)
of Theorem~\ref{main_theorem}.
Unlike the upper bound,
$\frac{1}{\lambda} r$ remains as a meaningful bound for hypergraphs of any rank.

\begin{theorem}
For every integers $r$,
$\lambda$  and $t$
with $r\ge 2$,
$2\le \lambda \le \frac{1}{2}r$
and $t\ge 2$,
there exist infinitely many $\lambda$-edge-connected $r$-regular hypergraphs $\HH$ of even order
with rank at least $t$
such that $\HH$ does not have a Berge $k$-factor for any odd integer $k$ with $k<\frac{1}{\lambda}r$.
\end{theorem}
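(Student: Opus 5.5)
The plan is to adapt the classical parity obstruction for odd $k$ (the graph examples showing Gallai's lower bound is sharp) and to plant large hyperedges inside the gadgets so that the rank is at least $t$. I do not need Theorem~\ref{parity_gf_factor} for non-existence. A direct parity count suffices, in the spirit of the counting at the end of the proof of Theorem~\ref{new_bound_essential}.

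\emph{Construction.} Fix $s$ (I will take $s=2\lambda$, or any multiple of $\lambda$ giving even total order) and put $m=rs/\lambda$. Let $S$ be a set of $s$ new vertices. Take $m$ disjoint copies $D_1,\dots,D_m$ of a gadget $D$ with the following properties.
\begin{enumerate}
\item $|V(D)|$ is odd.
\item Every vertex of $D$ has degree $r$, except for exactly $\lambda$ distinguished vertices of degree $r-1$ (or fewer, with a corresponding number of attachments).
\item $D$ is $\lambda$-edge-connected.
\item $D$ contains an edge of size $t$.
\end{enumerate}
Join the distinguished vertices of the copies to $S$ by ordinary $2$-element edges so that each $D_i$ sends exactly $\lambda$ edges to $S$ and each vertex of $S$ receives exactly $r$ of them. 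This is possible since $m\lambda=rs$, and it can be arranged so that each $S$-vertex meets at least $\lambda$ distinct $D_i$'s. The resulting $\HH$ is $r$-regular with rank at least $t$. Its order is $s+m|V(D)|$, and this can be made even by choosing the parity of $s$ and $m$; varying $s$ gives infinitely many examples.

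To build $D$, start from the classical graph gadget used for sharpness in Theorem~\ref{regular_factors}, which exists because $2\le\lambda\le r/2$ (so $r\ge 4$). Make it large enough to contain a cycle $v_1v_2\cdots v_tv_1$ of length $t$. Delete the edges of this cycle and add two copies of the hyperedge $\{v_1,\dots,v_t\}$. Each $v_j$ loses two incidences and gains two, so degrees are preserved, and connectivity only improves. For $\lambda$-edge-connectivity of $\HH$, the checks are as follows.
\begin{enumerate}
\item A cut separating part of some $D_i$ from the rest is handled by the internal $\lambda$-edge-connectivity of $D$ together with the $\lambda$ attachments.
\item A cut separating a set of whole $D_i$'s from $S$ costs at least $\lambda$ edges per copy.
\item A cut separating parts of $S$ is controlled because each $S$-vertex has $r\ge 2\lambda$ edges into distinct copies.
\end{enumerate}

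\emph{Non-existence.} Suppose $k$ is odd, $k<r/\lambda$, and $\FF$ is a Berge $k$-factor that is Berge-$G$ for a $k$-regular graph $G$. Since $|V(D_i)|$ and $k$ are odd, $\sum_{v\in D_i}\deg_G(v)=k|V(D_i)|$ is odd. Hence $e_G(V(D_i),V(\HH)\setminus V(D_i))$ is odd, and in particular at least $1$. Every edge of $G$ leaving $D_i$ is realized by a hyperedge meeting both $D_i$ and its complement. The only such hyperedges are the $2$-element attachment edges, so that edge of $G$ has its other endpoint in $S$. Therefore
\[
m\le \sum_{i=1}^m e_G(V(D_i),S)\le \sum_{x\in S}\deg_G(x)=k s .
\]
On the other hand $m=rs/\lambda>ks$ because $k<r/\lambda$, a contradiction.

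\emph{Main obstacle.} The main obstacle is the gadget $D$: it must have odd order, exact degree deficiency $\lambda$, a $t$-edge, and enough internal connectivity to make $\HH$ $\lambda$-edge-connected. I would first try an explicit family, namely a large $r$-regular-type circulant on an odd number of vertices with a matching-like set of $\lambda$ incidences removed, followed by the cycle-to-double-hyperedge swap. If odd order conflicts with degree parity, I would adjust by noting that $r|V(D)|-\lambda$ must be even only in the graph case, and the size-$t$ hyperedges relax this constraint. In that case I would choose $t$-edges or additional hyperedges to fix the degree sum.
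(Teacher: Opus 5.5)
Your non-existence count is correct. If each $D_i$ has odd order and meets the rest of $\HH$ only through $2$-element edges into $S$, then each $D_i$ needs a $G$-edge into $S$, so $m\le ks<rs/\lambda=m$. The gap is the construction. You leave it open, and the gadget route you sketch fails whenever $r\not\equiv\lambda\pmod 2$. A loopless graph gadget of odd order, with all degrees $r$ except a total deficiency of exactly $\lambda$, has degree sum $r|V(D)|-\lambda\equiv r-\lambda\pmod 2$. So it does not exist when $r\not\equiv\lambda\pmod 2$ (e.g.\ $r=5,\lambda=2$ or $r=6,\lambda=3$). This is the same parity phenomenon that produces $\lambda^{\ast}$ in Theorem~\ref{regular_factors}, so ``the classical graph gadget'' supplies $\lambda^{\ast}$ attachments, not $\lambda$. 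Your cycle-to-hyperedge swap cannot repair this: replacing $t$ graph edges by two copies of a $t$-set preserves the total number of incidences, hence its parity. Your closing remark about choosing ``$t$-edges or additional hyperedges to fix the degree sum'' is not a construction.

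The missing idea is to build the gadget from hyperedges from the start. Take an odd set $W$ with $|W|\ge\max\{t,\lambda+2\}$ and distinguished vertices $d_1,\dots,d_\lambda\in W$. Use $r-1$ parallel copies of $W$ and one copy of $W\setminus\{d_1,\dots,d_\lambda\}$. This has the required degrees, rank $|W|\ge t$, and no parity constraint. It is $(r-1)$-edge-connected, and $r-1\ge\lambda$.

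Second, your third connectivity check is invalid. When no $D_i$ is split, the cut is exactly a cut in the bipartite attachment multigraph between $S$ and $\{D_1,\dots,D_m\}$. The condition ``each $S$-vertex meets at least $\lambda$ distinct copies'' does not make that multigraph $\lambda$-edge-connected. For example, with $s=2\lambda$ and $m=2r$ you could wire two disjoint halves as two copies of $K_{\lambda,r}$, and $\HH$ would be disconnected. You must choose the pattern to be $\lambda$-edge-connected, for instance $K_{\lambda,r}$ with $s=\lambda$, $m=r$. You must also keep $s+m|V(D)|$ even. $K_{\lambda,r}$ gives order $\equiv\lambda+r\pmod 2$, so the case $\lambda+r$ odd still needs another pattern.

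The paper avoids both issues by putting the barrier on the hyperedge side rather than on vertices. It uses $\lambda$ parallel copies of one edge $X_0=\{x_1,\dots,x_{t'}\}$, meeting $t'>2\lambda$ odd parts $\{x_i\}\cup X_i$. Each part is held together by $r-\lambda\ge\lambda$ copies of $X_i\cup\{x_i\}$ and $\lambda$ copies of $X_i$. A hyperedge realizes at most one $G$-edge and so serves at most two parts, which is the computation $\eta(Y_0,\emptyset)=2\lambda-t'<0$. With this design, $\lambda$-edge-connectivity is immediate.
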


\begin{proof}
Let $t' = 2\max\left\{\left\lceil\frac{r}{k}\right\rceil,  t\right\}$.
Then $t'$ is an even integer.
We take an arbitrary even integer $s$ with $s > t'$.
Prepare $t'+1$ pairwise disjoint sets $X_0, X_1,\dots, X_{t'}$
with $|X_0|=t'$ and $|X_1|=|X_2|=\dots =|X_t|=s$.
Let $X_0 = \{x_1,\dots, x_{t'}\}$
and $X= \bigcup_{i=0}^{t'} X_i$.
Let $Y_0$ be a multiset consisting of $\lambda$ copies of $X_0$,
and let $Y_i$ and $Y'_i$ be multisets consisting of $r-\lambda$ copies of $X_i\cup\{x_i\}$ and
$\lambda$ copies of $X_i$,
respectively
($1\le i\le t'$).
Finally,
let $Y=Y_0\cup\left(\bigcup_{i=1}^{t'}(Y_i\cup Y'_i)\right)$,
and define a hypergraph $\HH$ by $\HH=(X, Y)$.
Let $B=B(X, Y)$ be the incidence graph of $\HH$ (see Fig.~\ref{lower_bound}).
Then $|\HH| = |X| =st'+t'=t'(s+1)\equiv 0\pmod{2}$.
Also $x_i\in X_0$ is incident with the edges in $Y_0\cup Y_i$ and
$x\in X_i$ is incident with the edges in $Y_i\cup Y'_i$.
Therefore,
$\HH$ is an $r$-regular hypergraph of rank $s+1$.

We claim that $\HH$ is $\lambda$-edge-connected.
Let $F$ be a set of edges in $\HH$ with $|F|\le \lambda-1$
and let $\HH'=\HH-F$.
Since $\lambda \le \frac{1}{2}r$,
$r-\lambda \ge \lambda$ and hence
$Y_i-F\ne\emptyset$
($0\le i\le t'$).
Take $y_i\in Y_i-F$.
Then $x_i y_0 x_j$ is a Berge-path in $\HH'$ connecting
$x_i, x_j\in X_0$
for $(1\le i < j \le t'$)
and $x y_i x_i$ is a Berge-path in $\HH'$
connecting a vertex $x$ in $X_i$ and $x_i\in X_0$.
Therefore,
$\HH'$ is connected,
and hence $\HH$ is $\lambda$-edge-connected.

\begin{figure}[t]
\centering
\includegraphics[width=200.6pt]{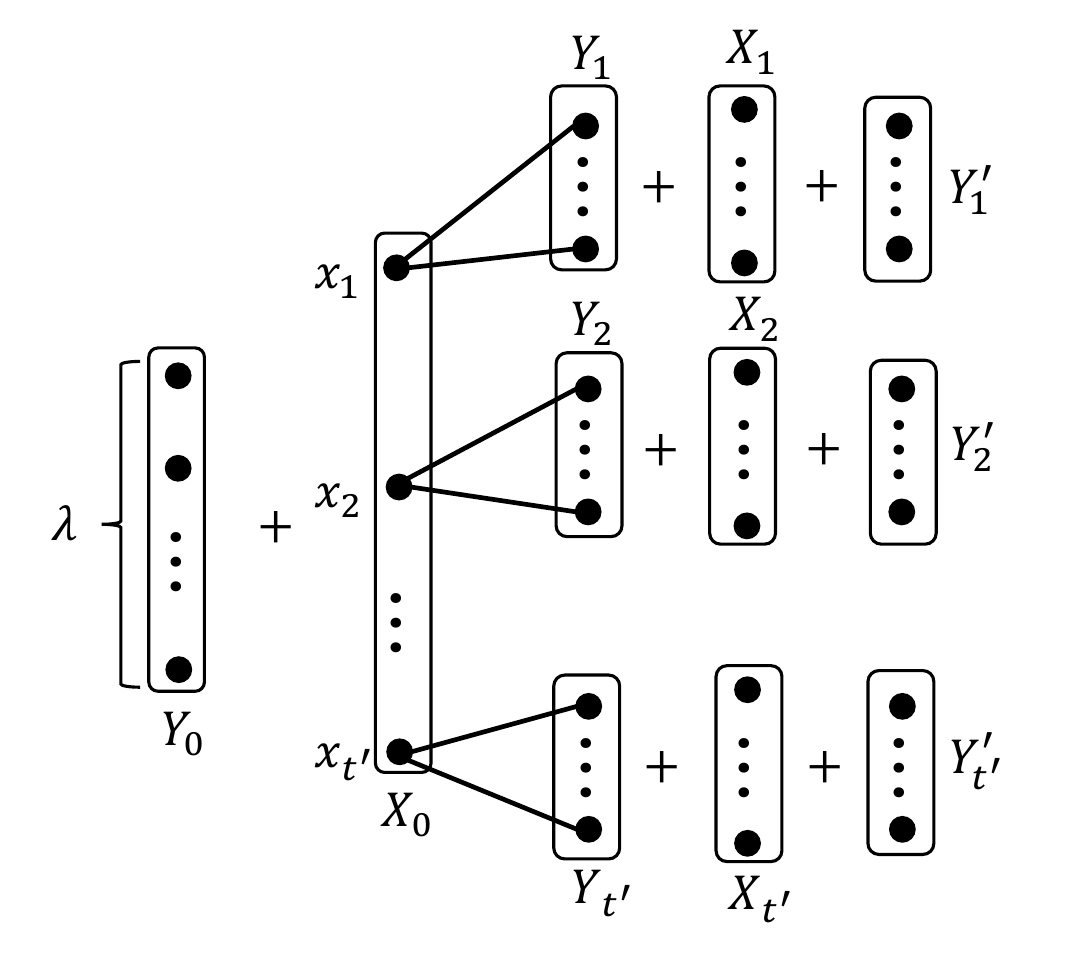}
%\vspace{-3zw}
\caption{The incidence graph of $\HH$.}
\label{lower_bound}
\end{figure}

Suppose $\HH$ has a $k$-factor for some odd $k$ with
$k < \frac{1}{\lambda}r$.
Then $B$ has a parity $(g, f)$-factor
with
$f(v)=k$ for every $v\in X$ and
$f(v)\in\{0, 2\}$ for every $v\in Y$.

We consider $\eta(Y_0,\emptyset)$.
The components of $B-Y_0$ is
the graphs $D_i$ induced by $\{x_i\}\cup Y_i\cup X_i\cup Y'_i$
($1\le i\le t'$).
Since $k$ is odd and $s$ is even,
\[
f(D_i)+e_B(D_i,\emptyset)
= k+ 2|Y_i|+k|X_i|+2|Y'_1|
\equiv k(s+1)\equiv 1\pmod{2}.
\]
Therefore,
$D_i$ is an $f$-odd component
and hence $q(Y_0, \emptyset)=t'$.
Now since $t'\ge 2\cdot \frac{r}{k}$
and $k < \frac{1}{\lambda}r$,
we have
\[
\eta(Y_0, \emptyset)
=2|Y_0|-t'=2\lambda -t' < 2\cdot \frac{r}{k}-2\cdot\frac{r}{k}=0.
\]
Therefore,
$\HH$ does not have a $k$-factor.
\end{proof}

Finally,
we discuss the condition~(3).
Among four statements in Theorem~\ref{regular_factors},
(1) is the only one that does not involve edge-connectivity.
We suspect  that a mechanism is different  from that of the other statements.
The condition~(3) seems to suggest that the parity of the number of vertices
contained in each edge plays an important role.

\bigskip \noindent
{\bf Acknowledgments}
The second author was supported by JSPS KAKENHI Grant Number JP22K13956 and JSPS KAKENHI Grant Number JP25K17301.
The third author was supported by JSPS KAKENHI Grant Number 24K06835. The fourth author was  supported by JSPS KAKENHI Grant Numbers  24K06836.  This work was supported by the Research Institute for Mathematical Sciences, an International Joint Usage/Research Center located in Kyoto University.

\end{document}